\newtheorem{theorem}{Theorem}[section]
\newtheorem{proposition}[theorem]{Proposition}
\newtheorem{corollary}[theorem]{Corollary}
\theoremstyle{definition}
\newtheorem{definition}{Definition}
\newtheorem{remark}{Remark}
\newtheorem{conjecture}[theorem]{Conjecture}
\begin{document}

\title[Notes on the regular graph]
{Some notes on the regular graph defined by Schmidt and Summerer and uniform approximation}

\author{Johannes Schleischitz}
                            
\thanks{Supported by FWF grant P24828,\\
	Institute of Mathematics, Department of Integrative Biology, BOKU Wien, 1180, Vienna, Austria \\
	johannes.schleischitz@boku.ac.at}

\begin{abstract} 
Within the study of parametric geometry of numbers  
W. Schmidt and L. Summerer introduced so-called regular graphs. 
Roughly speaking the successive minima functions for the classical
simultaneous Diophantine approximation problem have a very special pattern
if the vector $\underline{\zeta}$ induces a regular graph. The regular graph
is in particular of interest due to a conjecture by Schmidt and Summerer concerning classic approximation
constants. This paper aims to provide several new results on the behavior of the
successive minima functions for the regular graph. Moreover, we improve the best known
upper bounds for the classic approximation constants $\widehat{w}_{n}(\zeta)$, 
provided that the Schmidt-Summerer conjecture is true.   
\end{abstract}

\maketitle

{\footnotesize{Math subject classification: 11J13, 11J25, 11J82 \\
key words: successive minima, lattices, regular graph, uniform Diophantine approximation}}

\section{Introduction} \label{sektion1} 

\subsection{Outline}
This paper aims on the one hand to give a better understanding of the regular graph defined by Schmidt and Summerer,
and on the other hand to establish a connection to the uniform approximation constants $\widehat{w}_{n}$. 
Theorem~\ref{bergzwerg} and Theorem~\ref{moshthm} can be considered the main results concerning the first, 
Theorem~\ref{reggraph} the main result for the latter topic. 
In Section~\ref{sscon} we will define the regular graph and explain 
its significance for simultaneous Diophantine approximation.
We recommend the reader to look at the illustrations of combined graphs and in particular the 
regular graph in~\cite[page 90]{sums}, another sketch adopted from~\cite[page 72]{j1}
is visible in Section~\ref{sscon}.
See also~\cite{j1} for Matlab plots of the combined graph
for special choices of real vectors. Finally in Section~\ref{andere} we discuss 
the consequences of another reasonable conjecture to uniform approximation.

\subsection{Geometry of numbers} \label{param}

We start with a classical problem of simultaneous approximation.
Assume $\underline{\zeta}=(\zeta_{1},\ldots,\zeta_{n})$ in $\mathbb{R}^{n}$ is given.
For $1\leq j\leq n+1$ let $\lambda_{n,j}=\lambda_{n,j}(\underline{\zeta})$ be the supremum of real $\nu$ 
for which there are arbitrarily large $X$ such that the system
\begin{equation} \label{eq:systematisch}
\vert x\vert\leq X, \qquad \max_{1\leq j\leq k} \vert \zeta_{j}x-y_{j}\vert \leq X^{-\nu}
\end{equation}
has $j$ linearly independent solution vectors $(x,y_{1},\ldots,y_{n})$ in $\mathbb{Z}^{n+1}$. 
Moreover let $\widehat{\lambda}_{n,j}=\widehat{\lambda}_{n,j}(\underline{\zeta})$ be
the supremum of $\nu$ such that the system \eqref{eq:systematisch}
has $j$ linearly independent integer vector solutions $(x,y_{1},\ldots,y_{n})$ 
for all large $X$. 
For $\lambda_{n,1}$ we will also simply write $\lambda_{n}$, and similarly $\widehat{\lambda}_{n}$
for $\widehat{\lambda}_{n,1}$. For all $\underline{\zeta}\in{\mathbb{R}^{n}}$, 
Minkowski's first lattice point theorem (or Dirichlet's Theorem)
implies the estimates
\begin{equation} \label{eq:dir1}
\lambda_{n}\geq \widehat{\lambda}_{n}\geq \frac{1}{n}.
\end{equation}
More generally it can be shown that
\begin{equation}
\frac{1}{n}\leq \lambda_{n}\leq \infty, \qquad
\frac{1}{n}\leq \lambda_{n,2}\leq 1, \qquad
0\leq \lambda_{n,j}\leq \frac{1}{j-1}\quad (1\leq j \leq n+1),   \label{eq:1}  
\end{equation}
and similarly
\begin{equation}
\frac{1}{n}\leq \widehat{\lambda}_{n}\leq 1, \qquad
0\leq \widehat{\lambda}_{n,j}\leq \frac{1}{j} \quad (2\leq j\leq n), \qquad
0\leq \widehat{\lambda}_{n,n+1}\leq \frac{1}{n}.       \label{eq:6}  
\end{equation}
See \cite[(14)-(18)]{j1}. Moreover $\lambda_{n,j}\geq \widehat{\lambda}_{n,j-1}$ holds for $2\leq j\leq n+1$ as pointed out in~\cite{ss}. 

Schmidt and Summerer investigated
a parametric version of the simultaneous approximation problem above~\cite{ss},~\cite{ssch}.
We will now introduce some concepts and results of the evolved parametric geometry 
of numbers from~\cite{ss}. 
Our notation will partially deviate from~\cite{ss}
for technical reasons. 
Keep $n\geq 1$ an integer
and $\underline{\zeta}=(\zeta_{1},\ldots,\zeta_{n})$ a fixed vector in $\mathbb{R}^{n}$.
For any parameter $Q>1$ and any $1\leq j\leq n+1$, consider the largest number $\nu$ such that
\[
\vert x\vert \leq Q^ {1+\nu}, \qquad  
\max_{1\leq j\leq k} \vert \zeta_{j}x-y_{j}\vert \leq Q^ {-1/n+\nu}, 
\] 
has $j$ linearly independent integral solution vectors $(x,y_{1},\ldots,y_{n})\in{\mathbb{Z}^{n+1}}$.
Denote by $\psi_{n,j}(Q)$ this value. Dirichlet's Theorem
yields $\psi_{n,1}(Q)<0$ for all $Q>1$.
Further let
 \[
 \underline{\psi}_{n,j}=\liminf_{Q\to\infty} \psi_{j}(Q), \qquad
 \overline{\psi}_{n,j}=\limsup_{Q\to\infty} \psi_{j}(Q).
 \]
It is not hard to see that
  \[
  -1\leq \psi_{n,1}(Q)\leq \psi_{n,2}(Q)\leq \cdots \leq \psi_{n,n+1}(Q)\leq \frac{1}{n}, \qquad Q>1,
 \]
and in particular
  \[
  -1\leq \underline{\psi}_{n,j}\leq \overline{\psi}_{n,j}\leq\frac{1}{n}, \qquad 1\leq j\leq n+1.
  \]
For $q= \log Q$ consider the derived functions
\begin{equation} \label{eq:lbedingung}
L_{n,j}(q)= q\psi_{n,j}(Q), \qquad 1\leq j\leq n+1.
\end{equation}
They have the nice property of being piecewise linear with slope among $\{-1,1/n\}$. 
The functions $\psi_{n,j}$ and the derived $L_{n,j}$ can alternatively 
be defined via a classical successive minima problem of a parametrized family 
of convex bodies with respect to a lattice. 
For the details see~\cite{ss}. 
A crucial observation from this point of view is that 
Minkowki's second theorem yields pointed out in~\cite{ss} is that the sum
of $L_{n,j}$ over $j$ is uniformly bounded by absolute value for $q>0$.
The connection between the constants $\lambda_{n,j}$ and the functions
$\psi_{n,j}$ is given by the formula
\[
(1+\lambda_{n,j})(1+\underline{\psi}_{n,j})=
(1+\widehat{\lambda}_{n,j})(1+\overline{\psi}_{n,j})=\frac{n+1}{n}, \qquad 1\leq j\leq n+1.
\]
This was pointed out in \cite[(13)]{j1}, which generalized \cite[Theorem~1.4]{ss}.
In particular, for $1\leq j\leq n+1$, we have the equivalences
\begin{equation} \label{eq:aequi}
\underline{\psi}_{n,j}<0 \; \Longleftrightarrow \; \lambda_{n,j}>\frac{1}{n}, \qquad 
\overline{\psi}_{n,j}<0 \; \Longleftrightarrow \; \widehat{\lambda}_{n,j}>\frac{1}{n}.
\end{equation}

We now briefly introduce the dual problem studied in~\cite{ss} as well. 
Define the classic approximation constant $w_{n,j}$ and $\widehat{w}_{n,j}$ respectively as the
supremum of $\nu$ such that the system
\[
\max\{\vert x\vert, \vert y_{1}\vert,\ldots,\vert y_{n}\vert\} \leq X, 
\qquad \vert x+\zeta_{1}y_{1}+\cdots+\zeta_{n}y_{n}\vert \leq X^{-\nu},
\]
has $j$ linearly independent integer vector solutions for 
arbitrarily large $X$ and all large $X$,
respectively. Again we also write $w_{n}$ instead of $w_{n,1}$ and $\widehat{w}_{n}$
instead of $\widehat{w}_{n,1}$. 
In this context Minkowski's first lattice point theorem 
(or Dirichlet's Theorem) implies
\begin{equation} \label{eq:dir2}
w_{n}\geq \widehat{w}_{n}\geq n.
\end{equation}
As already mentioned in~\cite[(1.24)]{j2}, it can be shown that
\begin{equation} \label{eq:platz}
w_{n,j}=\frac{1}{\widehat{\lambda}_{n,n+2-j}}, \qquad \lambda_{n,j}=\frac{1}{\widehat{w}_{n,n+2-j}},
\qquad\qquad 1\leq j\leq n+1.
\end{equation} 
Together with the bounds in \eqref{eq:1} and \eqref{eq:6},
for the spectra of the exponents we obtain 
\begin{equation}
n\leq w_{n}\leq \infty, \qquad n+2-j\leq w_{n,j}\leq \infty,  \quad 2\leq j\leq n, 
\qquad 1\leq w_{n,n+1}\leq n,    \label{eq:11} 
\end{equation}
such as
\begin{equation}
n+1-j\leq \widehat{w}_{n,j}\leq \infty, \quad 1\leq j\leq n-1, \qquad
1\leq \widehat{w}_{n,n}\leq n,  \qquad
0\leq \widehat{w}_{n,n+1}\leq n.       \label{eq:16}  
\end{equation}
Schmidt and Summerer studied a
parametric version of the linear form problem as well in~\cite{ss}, however the
above classic exponents will suffice for our purposes.

\subsection{The regular graph and the Schmidt-Summerer Conjecture} \label{sscon}

For fixed $n\geq 1$ and a parameter $\rho\in{[1,\infty]}$ in~\cite{sums} Schmidt and Summerer define what is called the regular graph.
This geometrically describes a special pattern of the combined graph of the successive minima functions
$L_{n,j}(q)=L_{n,j}(\log Q)$ from Section~\ref{param}. 
We refer to \cite[page 90]{sums} for an idealized 
illustrations of the functions $L_{n,j}(q)$ for 
the regular graph connected to approximation of 
three numbers , i.e. $n=3$ in our notation. Figure~1 below
depicts a sketch for $n=2$, which was already presented in \cite[page~72]{j1}. 
The solid lines depic the graphs of the functions $L_{2,1},L_{2,2},L_{2,3}$
whereas the dotted lines correspond to the quantities 
$\underline{\psi}_{2,j}, \overline{\psi}_{2,j}$ for $1\leq j\leq 3$.
Notice that $\underline{\psi}_{2,j+1}=\overline{\psi}_{2,j}$ in the regular graph.

\begin{figure}[h!]
	\centering
	\scalebox{0.6}{\centering
		\includegraphics{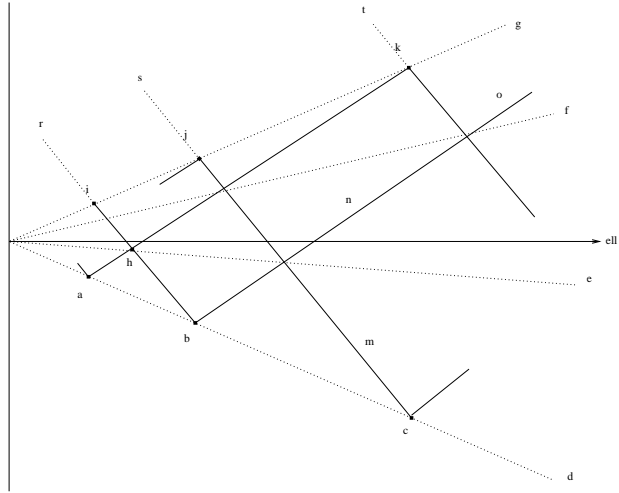}}
	\caption{Sketch of the regular graph for $n=2$}
\end{figure}   

Roughly speaking, the integers $(x_{k})_{k\geq 1}$ that induce 
a not too short falling period of all $L_{n,j}(q)$, 
coincide for all $1\leq j\leq n+1$ and have the additional property that 
the logarithmic quotients $\log x_{k+1}/\log x_{k}$
tend to $\lambda_{n}/\widehat{\lambda}_{n}$. 
An immediate consequence
already mentioned in~\cite[Section~3]{j1} is that all quotients
$\lambda_{n,j}/\lambda_{n,j+1}=\lambda_{n,j}/\widehat{\lambda}_{n,j}$
coincide for $1\leq j\leq n+1$. That is
\begin{equation} \label{eq:quot}
\frac{\lambda_{n}}{\lambda_{n,2}}=\frac{\lambda_{n,2}}{\lambda_{n,3}}=\cdots=\frac{\lambda_{n,n+1}}{\lambda_{n,n+2}}
=\frac{\widehat{\lambda}_{n}}{\widehat{\lambda}_{n,2}}=\cdots=\frac{\widehat{\lambda}_{n,n}}{\widehat{\lambda}_{n,n+1}},
\end{equation}
where we have put $\lambda_{n,n+2}:= \widehat{\lambda}_{n,n+1}$,
which shall remain for the sequel.
Moreover it is obvious from its definition that the regular graph satisfies
\begin{equation} \label{eq:galeich}
\lambda_{n,j}= \widehat{\lambda}_{n,j-1}, \qquad 2\leq j\leq n+2.
\end{equation}
%
%
%
In view of \eqref{eq:quot} and \eqref{eq:galeich},
all $\lambda_{n,j}, \widehat{\lambda}_{n,j}$ are determined by one 
real parameter $\lambda\geq 1/n$. According to
\eqref{eq:platz}, this applies to all exponents $w_{n,j}$ and $\widehat{w}_{n,j}$ as well.
The parameter $\rho\in{[1,\infty]}$ in Schmidt-Summerer notation coincides with the value
$\lambda_{n}/\widehat{\lambda}_{n}$ in \eqref{eq:quot}.
We will use a different 
parametrization. We consider the equivalent situation that the constant 
$\lambda_{n}$ is prescribed in the interval $[1/n,\infty]$.
Any such choice again uniquely determines a regular graph in dimension $n$ and vice versa. 
Thus we have the assignment
\begin{equation} \label{eq:zuordnung}
(n,\lambda)\to (\lambda_{n},\lambda_{n,2},\ldots,\lambda_{n,n+1},\lambda_{n,n+2}), \qquad 
\lambda\in{[1/n,\infty]},
\end{equation}
where $\lambda_{n}=\lambda$. 
We call the graph arising from \eqref{eq:zuordnung}
{\em the regular graph in dimension $n$ with parameter $\lambda$}.
For $n=2$ the graphs of the functions $\lambda_{2,j}$ are illustrated in Figure~2 below. 

\begin{figure}[h!]
	\centering
	\includegraphics[width=0.5\textwidth,natwidth=650,natheight=650]{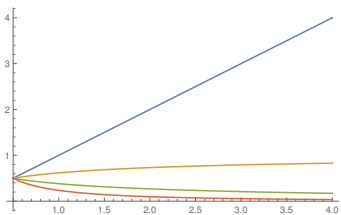}
	\caption{The functions $\lambda_{2,1}(\lambda),\lambda_{2,2}(\lambda),\lambda_{2,3}(\lambda),\lambda_{2,4}(\lambda)$
		in the interval $\lambda\in[1/2,4]$}
\end{figure}
It is rather obvious and
will follow from \eqref{eq:impgl} in Section~\ref{graph} that the right hand side 
in \eqref{eq:zuordnung} depends continuously on $\lambda$.
In view of \eqref{eq:galeich}, the assignment \eqref{eq:zuordnung} 
contains the entire information on all exponents
$\lambda_{n,j}, \widehat{\lambda}_{n,j}$. 
We will also write $\lambda_{n,j}(\lambda)$ and $\widehat{\lambda}_{n,j}(\lambda)$
for the quantity $\lambda_{n,j}$ and $\widehat{\lambda}_{n,j}$ respectively
in the regular graph in dimension $n$ and parameter $\lambda$.
It is worth noting that for $\lambda_{n}=\lambda=1/n$ all constants in \eqref{eq:zuordnung} take the value $1/n$, which a very general elementary consequence of Minkoski's second theorem. 
Moreover, in the
other degenerate case of the regular graph $\lambda=\infty$, it is not hard to see that
$\lambda_{n,2}(\infty)=1$ and $\lambda_{n,j}(\infty)=0$ holds for $3\leq j\leq n+2$, see also 
Proposition~\ref{lagegrund} below. Roy~\cite{roy5} proved that
for any pair $(n,\lambda)$ as in \eqref{eq:zuordnung}, there exist $\mathbb{Q}$-linearly independent vectors $\underline{\zeta}$ (together  with $\{1\}$) that induce the corresponding 
regular graph. The existence of the regular graph
for the special ''degenerate'' case $\lambda=\infty$ had already been constructively 
proved before by the author~\cite[Theorem~4]{j1}.

The importance of the regular graph stems in particular from a conjecture
by Schmidt and Summerer~\cite{sums}. It suggests that the regular graph with assigment
\eqref{eq:zuordnung}  maximizes the value $\widehat{\lambda}_{n}$ 
among all $\underline{\zeta}$ that are $\mathbb{Q}$ linearly independent with $1$ and
share the prescribed value $\lambda_{n}(\underline{\zeta})=\lambda$. A dual version
of the conjecture states that $\widehat{w}_{n}$ is maximized for given value of $w_{n}$
in the regular graph as well. For convenience we introduce some notation.

\begin{definition} \label{lator}
Let $\phi_{n}$ be the function that expresses $\widehat{w}_{n}$ in terms of $w_{n}\in{[n,\infty]}$
and $\vartheta_{n}$ the function that expresses the
value $\widehat{\lambda}_{n}$ in terms of $\lambda_{n}\in{[1/n,\infty]}$ in the regular graph. 
\end{definition}

Note that $\vartheta_{n}(\lambda)$ coincides with $\widehat{\lambda}_{n}(\lambda)=\lambda_{n,2}(\lambda)$
defined above.
The Schmidt-Summerer Conjecture can now be stated in the following way.

\begin{conjecture}[Schmidt, Summerer]   \label{schmsum}
For any positive integer $n$ and every $\underline{\zeta}\in{\mathbb{R}^{n}}$ which is $\mathbb{Q}$-linearly
independent together with $\{1\}$, we have
$\widehat{w}_{n}(\underline{\zeta})\leq \phi_{n}(w_{n}(\underline{\zeta}))$ and 
$\widehat{\lambda}_{n}(\underline{\zeta})\leq \vartheta_{n}(\lambda_{n}(\underline{\zeta}))$.
In particular for any real transcendental $\zeta$ and all $n\geq 1$ we 
have $\widehat{w}_{n}(\zeta)\leq \phi_{n}(w_{n}(\zeta))$ and 
$\widehat{\lambda}_{n}(\zeta)\leq \vartheta_{n}(\lambda_{n}(\zeta))$.  
\end{conjecture}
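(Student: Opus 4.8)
\emph{Outline of a possible approach.} Since the statement is a conjecture, what follows is a proof \emph{strategy}, not a complete argument. The natural framework is the parametric geometry of numbers recalled in Section~\ref{param}: one rephrases both inequalities as statements about the combined graph of the functions $L_{n,j}(q)$. Via the transference relations \eqref{eq:platz} together with the identity $(1+\lambda_{n,j})(1+\underline{\psi}_{n,j})=(1+\widehat{\lambda}_{n,j})(1+\overline{\psi}_{n,j})=(n+1)/n$, the $w$-version and the $\lambda$-version are dual to one another (under the reflection $L_{n,j}\mapsto-L_{n,n+2-j}$ of the combined graph), and in the regular graph each reduces to the other through the one-parameter description \eqref{eq:quot}--\eqref{eq:galeich}; I would therefore treat only the assertion $\widehat{\lambda}_{n}(\underline{\zeta})\le\vartheta_{n}(\lambda_{n}(\underline{\zeta}))$. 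By the equivalences \eqref{eq:aequi} this means: among all admissible $\underline{\zeta}$ sharing a prescribed value $s:=\underline{\psi}_{n,1}$, the quantity $\overline{\psi}_{n,1}$ is at least as large as it is in the regular graph with $\underline{\psi}_{n,1}=s$. The special case of $(\zeta,\dots,\zeta^{n})$ is then immediate, such vectors being $\mathbb{Q}$-linearly independent with $\{1\}$.

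Next I would assemble the structural constraints on the combined graph already recorded above: each $L_{n,j}$ is continuous and piecewise linear with slopes in $\{-1,1/n\}$; one has $L_{n,1}(q)\le L_{n,2}(q)\le\cdots\le L_{n,n+1}(q)$ with $-q\le L_{n,1}(q)$ and $L_{n,n+1}(q)\le q/n$; the sum $\sum_{j=1}^{n+1}L_{n,j}(q)$ is bounded in absolute value by Minkowski's second theorem; and the transference inequality $\lambda_{n,j}\ge\widehat{\lambda}_{n,j-1}$ translates into $\underline{\psi}_{n,j}\le\overline{\psi}_{n,j-1}$, so that the numbers $\underline{\psi}_{n,\bullet}$ and $\overline{\psi}_{n,\bullet}$ interlace, with \emph{equality} holding precisely in the regular graph by \eqref{eq:galeich}. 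Combining the slope constraint with the bounded sum yields the rigidity that, in a time-averaged sense, exactly one of the $n+1$ functions is decreasing at a time: if $m(q)$ branches descend and $n+1-m(q)$ ascend, the sum of slopes is $-m(q)+(n+1-m(q))/n$, whose integral over $[0,T]$ is $O(1)$, forcing the average of $m(q)$ to tend to $1$. This singles out the regular graph as the unique \emph{periodic, self-similar} configuration in which the minima descend one at a time in a fixed cyclic order with self-similarity ratio $\lambda_{n}/\widehat{\lambda}_{n}=\rho$; the content of the conjecture is that this maximally balanced, maximally rigid configuration is extremal.

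The heart of the matter would be an argument by contradiction. Suppose $\overline{\psi}_{n,1}<r-\delta$, where $r=r(n,s)$ is the regular-graph value; then $L_{n,1}(q)\le(r-\delta)q$ for all large $q$, while there are $q_{k}\to\infty$ with $L_{n,1}(q_{k})\le(s+\delta)q_{k}$. On each ``cycle'' between consecutive deep minima of $L_{n,1}$ one would use the bounded-sum identity, the ordering, and the interlacing of $\underline{\psi}_{n,\bullet}$ and $\overline{\psi}_{n,\bullet}$ to show that the graph cannot climb back from level $\approx sq$ to level $\approx(r-\delta)q$ within the available horizon without breaking one of the constraints, the point being that the regular graph already realises the \emph{fastest admissible} recovery; any strict improvement of $\overline{\psi}_{n,1}$ would then force, summing over many cycles, a value of $\underline{\psi}_{n,1}$ strictly below $s$, hence $\lambda_{n}$ strictly above the prescribed value --- a contradiction. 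The decisive obstacle, and the reason the conjecture is open, is the control of the \emph{higher} minima $L_{n,2},\dots,L_{n,n+1}$ during a cycle: a priori one cannot exclude an ``irregular'' redistribution of the descents among several indices simultaneously which, while respecting the sum bound and the ordering, recovers faster than the one-at-a-time regular pattern. Ruling this out seems to require a genuine extremality or rearrangement principle for the admissible graphs (Roy's $n$-systems), showing that concentrating the descent on a single minimum in the cyclic fashion of the regular graph is optimal; I would expect this exchange/convexity step, not the bookkeeping around it, to be the true difficulty. The cases $n\le2$ are known, and could anchor an induction on $n$, although no clean inductive step is evident.
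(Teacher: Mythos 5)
The statement you were given is Conjecture~\ref{schmsum}. The paper does \emph{not} prove it: it is the Schmidt--Summerer conjecture, stated in Section~\ref{sscon} to be settled only for $n\in\{2,3\}$ (Schmidt--Summerer, with a simpler proof by German--Moshchevitin) and open for $n\geq 4$. Throughout Sections~\ref{implic} and~\ref{4proofs} the paper uses it purely as a hypothesis, e.g.\ in Theorem~\ref{reggraph} and in inequality \eqref{eq:fritze}. There is therefore no proof in the paper to compare your proposal against, and a complete proof should not be expected.

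On the merits of your sketch: you correctly identify the working framework (the piecewise-linear functions $L_{n,j}$ with slopes in $\{-1,1/n\}$, the bounded sum coming from Minkowski's second theorem, the equivalences \eqref{eq:aequi}, and the regular graph as the unique self-similar configuration with $\underline{\psi}_{n,j+1}=\overline{\psi}_{n,j}$), and you correctly locate the essential obstruction, namely a rearrangement or extremality principle among admissible graphs ruling out a faster-than-regular recovery by distributing the descent over several indices at once. This is in fact where the proofs for $n\le 3$ require a delicate, non-scaling case analysis. One small inaccuracy worth flagging: the transference relations \eqref{eq:platz} alone do not reduce the $\widehat{w}_{n}\le\phi_{n}(w_{n})$ inequality to the $\widehat{\lambda}_{n}\le\vartheta_{n}(\lambda_{n})$ inequality, since \eqref{eq:platz} swaps $w_{n,j}$ with $\widehat{\lambda}_{n,n+2-j}$ (and $\lambda_{n,j}$ with $\widehat{w}_{n,n+2-j}$) rather than $w_{n}$ with $\lambda_{n}$; the genuine duality you want is the Mahler duality of the parametric setup, which you do invoke via the reflection $L_{n,j}\mapsto -L_{n,n+2-j}$, but that is a separate tool from \eqref{eq:platz}. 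As an outline your account is consistent with what is known; as a proof it has the gap you yourself identify, and that gap is exactly why the conjecture remains open.
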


For $n\in\{2,3\}$ Schmidt and Summerer settled Conjecture~\ref{schmsum} in~\cite{ssch} and \cite{sums},
see also Moshchevitin~\cite{germosh}. For $n\geq 4$ it is open.
As mentioned above equality holds for suitable $\underline{\zeta}$,
so Conjecture~\ref{schmsum} would lead to sharp bounds.

\section{Structural study of the regular graph} \label{newregular}

\subsection{Fixed $\lambda$} \label{fixla}
In this short section let $\lambda>0$ be given.
We investigate constants $\lambda_{n,j}$ in the regular graph for prescribed value $\lambda_{n}=\lambda$ 
in dependence of $n$, for which obviously it is necessary and sufficient to assume $n\geq \lceil \lambda^{-1}\rceil$.
Recall the notation $\lambda_{n,j}(\lambda)$ and $\widehat{\lambda}_{n,j}(\lambda)$
for the constants $\lambda_{n,j}, \widehat{\lambda}_{n,j}$ obtained in the regular
graph in dimension $n$ and the parameter $\lambda_{n,1}=\lambda_{n}=\lambda$. 
Our first result shows
roughly speaking that for fixed $\lambda_{n}=\lambda$, the remaining constants $\lambda_{n,j}(\lambda)$ 
for fixed $j\geq 2$ are decreasing as the dimension $n$ increases.

\begin{proposition} \label{propodil}
Let $\lambda>0$ be fixed and $n_{1}>n_{2}\geq j-1\geq 1$ be integers such that $n_{2}\geq \lceil\lambda^{-1}\rceil$.
Then the constants $\lambda_{n_{i},j}(\lambda), i\in{\{1,2\}}$ in the regular graph in dimensions $n_{1}$ 
and $n_{2}$ respectively and parameter $\lambda$ are well-defined and satisfy 
$\lambda_{n_{1},j}(\lambda)<\lambda_{n_{2},j}(\lambda)$.
\end{proposition}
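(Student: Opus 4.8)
The plan is to derive an explicit recursion/closed form for the constants $\lambda_{n,j}(\lambda)$ in the regular graph and then compare the two dimensions term by term. The starting point is the defining relations \eqref{eq:quot} and \eqref{eq:galeich}: in the regular graph with $\lambda_{n}=\lambda$ all the quotients $\lambda_{n,j}/\lambda_{n,j+1}$ are equal to a common value $\rho=\rho(n,\lambda)=\lambda_{n}/\widehat{\lambda}_{n}$, so that $\lambda_{n,j}(\lambda)=\lambda\,\rho^{-(j-1)}$ for $1\le j\le n+2$. The value $\rho$ is pinned down by the normalization coming from Minkowski's second theorem, namely that the successive minima functions $L_{n,j}$ have bounded sum; concretely this will give an equation of the form $\sum_{j=1}^{n+1}\bigl(\tfrac1n-(\tfrac1n+1)\,(1+\lambda_{n,j})^{-1}\bigr)=0$, or equivalently a ''balance'' equation $\sum_{j=1}^{n+1}\psi_{n,j}=0$ rewritten via the formula $(1+\lambda_{n,j})(1+\underline\psi_{n,j})=\tfrac{n+1}{n}$ quoted in Section~\ref{param}. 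Substituting $\lambda_{n,j}=\lambda\rho^{1-j}$ yields a single equation $F_{n}(\lambda,\rho)=0$ that implicitly defines $\rho=\rho(n,\lambda)$; I expect this to be displayed as the equation \eqref{eq:impgl} referred to in Section~\ref{fixla}. (I would not carry out this algebra in detail here, only set it up.)

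Once $\rho(n,\lambda)$ is available, the Proposition reduces to showing that for fixed $\lambda$ and fixed $j\ge 2$ the quantity $\lambda\,\rho(n,\lambda)^{-(j-1)}$ is strictly decreasing in $n$; since $j-1\ge 1$ this follows once I show that $\rho(n,\lambda)$ is strictly \emph{increasing} in $n$ (for $n\ge\lceil\lambda^{-1}\rceil$ and $\lambda$ fixed). Intuitively this is clear: more coordinates means more ''room'' is taken up by the lower minima, forcing a steeper common ratio; and in the degenerate endpoints it matches the known values ($\rho=1$ when $\lambda=1/n$, and $\rho=\infty$ as $\lambda\to\infty$, with $\lambda_{n,2}(\infty)=1$, $\lambda_{n,j}(\infty)=0$ for $j\ge3$, cf.\ the discussion before Proposition~\ref{lagegrund}). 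To make this rigorous I would compare $F_{n}(\lambda,\cdot)$ and $F_{n+1}(\lambda,\cdot)$: fix $\rho>1$ and show $F_{n}(\lambda,\rho)$ and $F_{n+1}(\lambda,\rho)$ have opposite-sign relationship forcing the root to move right, i.e.\ establish the one-sided inequality $F_{n+1}(\lambda,\rho(n,\lambda))\ \text{has the sign that pushes the root up}$, using monotonicity of $F_{n}(\lambda,\cdot)$ in $\rho$. The extra term appearing when passing from $n$ to $n+1$ — the new smallest minimum $\lambda_{n+1,n+2}$ together with the shift $\tfrac1n\to\tfrac1{n+1}$ in the normalization — should have a definite sign, and pinning that sign down is the crux.

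I expect the main obstacle to be exactly this sign analysis: the passage $n\to n+1$ changes the equation in two competing ways (an added summand \emph{and} a change of the constant $\tfrac{n+1}{n}$ and of the upper bound $\tfrac1n$ in every term), so a crude term-by-term comparison does not immediately work. The remedy I would try is to write $F_{n}$ in a form where the $n$-dependence is isolated — e.g.\ using the substitution $t=1+\lambda$ and $s=\rho$ and clearing denominators so that $F_{n}(\lambda,\rho)=0$ becomes a polynomial-type identity whose coefficients depend monotonically on $n$ — and then invoke strict monotonicity of each side in $\rho$ on the relevant interval to conclude that the roots are strictly ordered. An alternative, possibly cleaner route is to avoid solving for $\rho$ and instead argue directly on the $\lambda_{n,j}$: use the characterization that $(\lambda_{n,1},\dots,\lambda_{n,n+2})$ is the unique geometric-type sequence satisfying the balance equation, set up the analogous sequence in dimension $n_{1}$ against the one in dimension $n_{2}$, and show that if $\lambda_{n_{1},j_{0}}(\lambda)\ge\lambda_{n_{2},j_{0}}(\lambda)$ for some $j_{0}\ge 2$ then the common-ratio structure forces $\lambda_{n_{1},j}(\lambda)\ge\lambda_{n_{2},j}(\lambda)$ for all $j\ge 2$, which when summed contradicts the balance equations (the $n_{1}$-sequence would carry at least as much ''mass'' in fewer-slots-per-unit terms, violating the tighter normalization in the larger dimension). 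Either way, the well-definedness claim is immediate: $n_{2}\ge\lceil\lambda^{-1}\rceil$ is precisely the condition stated in Section~\ref{fixla} for the regular graph with $\lambda_{n}=\lambda$ to exist, and $n_{1}>n_{2}$ inherits it, while $n_{i}\ge j-1$ guarantees the index $j\le n_{i}+2$ is in range.
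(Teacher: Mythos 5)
Your reduction is correct: by the constant-ratio property \eqref{eq:quot} it suffices to show that $\widehat{\lambda}_{n}(\lambda)=\lambda_{n,2}(\lambda)$ (equivalently, $\rho(n,\lambda)=\lambda/\widehat{\lambda}_n(\lambda)$) is strictly monotone in $n$, and your remark that well-definedness is built into the hypotheses is fine. But there are two genuine gaps, and the second is the one you yourself flag as ``the crux'' without resolving.

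First, the implicit equation you write down is not the defining constraint of the regular graph. Your proposed balance $\sum_{j=1}^{n+1}\underline{\psi}_{n,j}=0$, i.e.\ $\sum_{j=1}^{n+1}(1+\lambda_{n,j})^{-1}=n$, is not satisfied by the regular graph. (For instance, with $n=2$ and common ratio $\rho=2$ one has $\lambda=\rho^2/(1+\rho)=4/3$ and $\sum_{j=1}^{3}(1+\lambda\rho^{1-j})^{-1}=3/7+3/5+3/4\neq 2$.) Minkowski's second theorem bounds $\sum_j L_{n,j}(q)$ uniformly, but the $\underline{\psi}_{n,j}$ are $\liminf$'s attained at \emph{different} values of $q$, so summing them does not give zero. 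The correct constraint for the regular graph is the identity \eqref{eq:total}, $\frac{(1+\lambda_n)^{n+1}}{\lambda_n}=\frac{(1+\widehat{\lambda}_{n,n+1})^{n+1}}{\widehat{\lambda}_{n,n+1}}$, proved in \cite{j1}, which in the ratio variable reads $\lambda=\rho^n/(1+\rho+\cdots+\rho^{n-1})$ — a cleaner equation than what your balance gives, but a different one. Your ``alternative route'' via summing balance equations inherits the same flaw.

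Second, and independently, even granting the correct implicit equation you never carry out the sign comparison. The paper does this concretely: writing $f_n(x)=(1+x)^{n+1}/x$ as in \eqref{eq:effen}, it uses the ratio $f_{n+1}(x)/f_n(x)=1+x$ together with \eqref{eq:total} to get $f_{n+1}(\widehat{\lambda}_{n+1,n+2}(\lambda))=(1+\lambda)f_n(\widehat{\lambda}_{n,n+1}(\lambda))$, deduces $\widehat{\lambda}_{n+1,n+2}(\lambda)<\widehat{\lambda}_{n,n+1}(\lambda)$ from the unimodality of $f_{n+1}$ (decreasing on $(0,1/(n+1))$), obtains the key bound $\widehat{\lambda}_{n,n+1}(\lambda)/\widehat{\lambda}_{n+1,n+2}(\lambda)>(1+\lambda)/\widehat{\lambda}_{n,n+1}(\lambda)$, and then feeds this into the explicit formula $\widehat{\lambda}_n(\lambda)=\lambda^{n/(n+1)}\widehat{\lambda}_{n,n+1}(\lambda)^{1/(n+1)}$ from \eqref{eq:sim} to conclude $\widehat{\lambda}_n(\lambda)/\widehat{\lambda}_{n+1}(\lambda)\geq 1$. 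None of this machinery — the $f_n$, the ratio trick, or the unimodality argument — is present in your proposal, and the competing effects you identify (an extra term versus a changed normalization) are exactly what this computation disentangles. As written, your text is a plan rather than a proof, and the plan rests on an incorrect defining equation.
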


\begin{remark} \label{markenreh}
The proposition can be used to obtain the following.
Consider the regular graphs in some fixed dimension $n\geq 2$ 
and let the parameter $\lambda$ tend to infinity. Then we have the asymptotic behaviour
\begin{equation} \label{eq:equatin}
\lim_{\lambda\to\infty} \lambda+1-\frac{\lambda}{\widehat{\lambda}_{n}(\lambda)}=0,
\end{equation}
with $\lambda_{n}=\lambda$ and $\widehat{\lambda}_{n}(\lambda)$ as in Section~\ref{sscon}.
The formula \eqref{eq:equatin}
was remarked but not proved in \cite{j1}. Observe that \eqref{eq:equatin}
in particular yields
\[
\lim_{\lambda\to\infty} \widehat{\lambda}_{n}(\lambda)=\lim_{\lambda\to\infty} \lambda_{n,2}(\lambda)=1.
\]
This property can be roughly seen in Figure~1.
\end{remark}

\begin{corollary} \label{korola}
Let $j\geq 2$ be an integer and $\lambda>0$ a fixed parameter. 
Consider the regular graphs in all dimensions 
$n\geq \lceil \lambda^{-1}\rceil$ with $\lambda_{n}=\lambda$ as in \eqref{eq:zuordnung}, which are well-defined.
Then we have 
\[
\lambda_{n,j}(\lambda)\geq \frac{\lambda}{(1+\lambda)^{j-1}}
\]
and the asymptotic behavior
\[
\lim_{n\to\infty} \widehat{\lambda}_{n,j-1}(\lambda)=\lim_{n\to\infty} \lambda_{n,j}(\lambda)
=\frac{\lambda}{(1+\lambda)^{j-1}}.
\]
\end{corollary}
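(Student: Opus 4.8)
The plan is to reduce everything to the single quantity $\rho_{n}=\rho_{n}(\lambda):=\lambda_{n}(\lambda)/\widehat{\lambda}_{n}(\lambda)\ge 1$. By \eqref{eq:quot} the numbers $\lambda_{n,j}(\lambda)$, $1\le j\le n+2$, form a geometric progression with first term $\lambda$ and quotient $\rho_{n}^{-1}$, i.e.\ $\lambda_{n,j}(\lambda)=\lambda\,\rho_{n}^{\,1-j}$, and \eqref{eq:galeich} gives $\widehat{\lambda}_{n,j-1}(\lambda)=\lambda_{n,j}(\lambda)$. Thus the claimed lower bound is exactly the inequality $\rho_{n}\le 1+\lambda$, and the two limit formulas amount to the single statement $\rho_{n}\to 1+\lambda$ as $n\to\infty$. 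The one extra ingredient I would use is the implicit relation \eqref{eq:impgl} describing the regular graph, which (after the reductions of Section~\ref{graph}) is equivalent to the normalization $\lambda_{n,2}(\lambda)+\lambda_{n,3}(\lambda)+\dots+\lambda_{n,n+1}(\lambda)=1$; summing the geometric series this turns into
\[
\rho_{n}-1=\lambda\bigl(1-\rho_{n}^{\,-n}\bigr),\qquad\text{equivalently}\qquad \rho_{n}=1+\lambda-\lambda\,\rho_{n}^{\,-n},
\]
so $\rho_{n}$ is the relevant root of $\rho^{\,n+1}-(1+\lambda)\rho^{\,n}+\lambda$, with $\rho_{n}>1$ when $\lambda>1/n$ and $\rho_{n}=1$ in the degenerate case $\lambda=1/n$.

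For the inequality I would simply note that $\rho_{n}\ge 1$ makes the correction term $\lambda\rho_{n}^{\,-n}$ strictly positive, whence $\rho_{n}<1+\lambda$; consequently, for $2\le j\le n+2$,
\[
\lambda_{n,j}(\lambda)=\lambda\,\rho_{n}^{\,-(j-1)}>\lambda(1+\lambda)^{-(j-1)}=\frac{\lambda}{(1+\lambda)^{j-1}},
\]
which is even slightly stronger than asserted (and for $j=1$ one has trivial equality).

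For the limits I would invoke Proposition~\ref{propodil} with $j=2$: it gives that $\lambda_{n,2}(\lambda)=\widehat{\lambda}_{n}(\lambda)$ is strictly decreasing in $n$, hence $\rho_{n}=\lambda/\widehat{\lambda}_{n}(\lambda)$ is strictly increasing in $n$. Together with $\rho_{n}<1+\lambda$ this forces $\rho_{n}\uparrow\rho_{\infty}$ for some $\rho_{\infty}\in(1,1+\lambda]$; here $\rho_{\infty}>1$ because $\rho_{n_{0}}>1$ already for any integer $n_{0}>1/\lambda$, so $\rho_{n}\ge c:=\rho_{n_{0}}>1$ for all $n\ge n_{0}$ and therefore $0<\rho_{n}^{\,-n}\le c^{-n}\to 0$. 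Letting $n\to\infty$ in $\rho_{n}=1+\lambda-\lambda\rho_{n}^{\,-n}$ then yields $\rho_{\infty}=1+\lambda$, so that for each fixed $j\ge 2$
\[
\lim_{n\to\infty}\widehat{\lambda}_{n,j-1}(\lambda)=\lim_{n\to\infty}\lambda_{n,j}(\lambda)=\lim_{n\to\infty}\lambda\,\rho_{n}^{\,-(j-1)}=\frac{\lambda}{(1+\lambda)^{j-1}}.
\]

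Given \eqref{eq:impgl} this is a short and essentially routine deduction; the only point that needs a bit of care is ruling out that $\rho_{n}$ drifts back toward $1$ as $n\to\infty$ (which would keep $\rho_{n}^{\,-n}$ from vanishing), and that is exactly what the monotonicity in Proposition~\ref{propodil} provides. I expect the genuine substance of the corollary to sit in \eqref{eq:impgl} itself — equivalently in the identity $\sum_{j=2}^{n+1}\lambda_{n,j}(\lambda)=1$, which ultimately reflects Minkowski's second theorem for the idealized regular graph — so that, once that is granted, everything above is bookkeeping.
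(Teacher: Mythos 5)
Your argument is correct and, stripped of superficial differences, follows the same route as the paper: reduce to a single parameter, derive an implicit equation for it from the material in Section~\ref{graph}, use Proposition~\ref{propodil} for monotonicity, and pass to the limit. Your parameter $\rho_n=\lambda_n/\widehat\lambda_n$ is just the reciprocal (up to the factor $1+\lambda$) of the paper's auxiliary $\alpha(n)$ from \eqref{eq:alpha}: one checks that $\alpha(n)=(1+\lambda)/\rho_n$, and then the paper's \eqref{eq:toni}, namely $\alpha(n)=1+\lambda(\alpha(n)/(1+\lambda))^{n+1}$, is algebraically the same as your $\rho_n=1+\lambda-\lambda\rho_n^{-n}$. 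Your derivation of that relation is sound; indeed one reaches it directly from \eqref{eq:total} combined with \eqref{eq:quot} (substitute $\widehat{\lambda}_{n,n+1}=\lambda\rho_n^{-(n+1)}$ and take the $(n+1)$-st root), or equivalently from \eqref{eq:notnice}.

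The two genuine differences are small but worth noting. First, for the lower bound $\lambda_{n,j}(\lambda)\ge\lambda/(1+\lambda)^{j-1}$ the paper simply cites $\widehat{\lambda}_n(\lambda)/\lambda>(\lambda+1)^{-1}$ from \cite[Proposition~5]{ss}, whereas you read it off directly from the implicit equation (since $\lambda\rho_n^{-n}>0$ forces $\rho_n<1+\lambda$); that makes the inequality step self-contained, and you even get strict inequality for $j\ge2$. Second, you package the implicit equation as the normalization $\sum_{j=2}^{n+1}\lambda_{n,j}(\lambda)=1$, which is a nice Minkowski-style interpretation not stated explicitly in the paper. One small caveat: \eqref{eq:impgl} itself is a per-$j$ relation between $\lambda$ and $\lambda_{n,j}(\lambda)$, not the normalization; what you actually want to invoke is \eqref{eq:total} (or its rearranged form \eqref{eq:notnice}) together with the constant-quotient property \eqref{eq:quot}, from which both your equation and the sum identity follow. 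For the limit, your argument that $\rho_{n_0}>1$ for some fixed $n_0$ and then $\rho_n^{-n}\le\rho_{n_0}^{-n}\to0$ by monotonicity is a slightly more direct way to rule out the degenerate drift than the paper's contradiction argument ruling out $\lim\alpha(n)\ge\lambda+1$, but they accomplish the same thing. Overall this is a correct proof in the same spirit, cleanly presented.
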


\subsection{Fixed $n$ and Schmidt's conjecture} \label{s32}

First we state a (by now settled) conjecture of W. Schmidt.
Recall the simultaneous approximation problem from Section~\ref{param}
can be interpreted as a successive minima problem of a parametrized family of
convex bodies with respect to a lattice. Schmidt conjectured
that for any integers $1\leq T\leq n-1$ there exist 
vectors $\underline{\zeta}$ that are
$\mathbb{Q}$ linearly independent together with $\{1\}$, and for which
the corresponding $T$-th successive minimum tends to $0$ whereas the $(T+2)$-nd
tends to infinity. 
In the language of Section~\ref{param} 
this means precisely that
the function $L_{n,T}(q)$ tends to $-\infty$ whereas $L_{n,T+2}(q)$
tends to $+\infty$ as $q\to\infty$.
For convenience we introduce some notation.

\begin{definition}
Let $n,T$ be integers with $1\leq T\leq n-1$.
We say $\underline{\zeta}\in{\mathbb{R}^{n}}$ satisfies Schmidt's property for $(n,T)$
if $\underline{\zeta}$ is $\mathbb{Q}$-linearly independent together with $\{1\}$ and
the induced functions $L_{n,j}$ from Section~\ref{param} satisfy
$\lim_{q\to\infty} L_{n,T}(q)=-\infty$ and $\lim_{q\to\infty} L_{n,T+2}(q)=\infty$.
\end{definition}

So Schmidt's conjecture claims that for any reasonable pair $(n,T)$, 
the set of vectors that satisfy Schmidt's property is non-empty.
The conjecture was proved
in a complicated non-constructive way by Moshchevitin~\cite{mosh2}. 
In case of $T$ not too close to $n$,
where the condition $T<n/\log n$ is sufficient, it was reproved constructively in~\cite{j1}. 
We should remark that the modified Schmidt property for 
$L_{n,T}$ and $L_{n,T+1}$ instead of $L_{n,T}$ and $L_{n,T+2}$ 
cannot be satisfied if $\underline{\zeta}$ is $\mathbb{Q}$-linearly independent together
with $\{1\}$. Indeed it must fail since then $L_{n,j}(q)=L_{n,j+1}(q)$
has arbitrarily large solutions $q$ for any $1\leq j\leq n$, see~\cite[Theorem~1.1]{ss}. 
On the other hand, if one drops the linear independence condition, the conjecture would 
be true as well by a rather easy argument, as carried out in~\cite{mosh2}.

By \eqref{eq:lbedingung}, a sufficient condition for a vector to satisfy
Schmidt's conjecture is given by
$\overline{\psi}_{n,T}<0<\underline{\psi}_{n,T+2}$. 
In view of \eqref{eq:aequi} that is in turn equivalent to
$\lambda_{n,T+2}<1/n<\widehat{\lambda}_{n,T}$. 
In this context recall that for the regular graph we have $\widehat{\lambda}_{n,T}=\lambda_{n,T+1}$.
We will 
investigate below how the quantities $\lambda_{n,j}$ for
the regular graph in fixed dimension $n$ in depend on the parameter $\lambda\geq 1/n$. 
Concretely when we ask for the largest index $j$ 
such that $\lambda_{n,j}$ is larger than $1/n$ in such intervals, the above correspondence
indicates the close connection to Schmidt's conjecture. 
Indeed Theorem~\ref{moshthm} will provide the link.
We start with an easy but important preparatory observation.

\begin{proposition} \label{lagegrund}
Let $n\geq 2$ and $1\leq j\leq n+2$. Then the quantities
$\lambda_{n,j}(\lambda)=\widehat{\lambda}_{n,j-1}(\lambda)$ for the regular graph in dimension $n$
with parameter $\lambda$ satisfies
\[
\frac{\lambda}{(1+\lambda)^{j-1}}\leq \lambda_{n,j}(\lambda)\leq \lambda^{2-j}, \qquad \lambda\in{[1/n,\infty]}.
\]
In particular if $j\geq 3$ then $\lambda_{n,j}(\lambda)$ tends to $0$ as $\lambda$ tends to infinity.
\end{proposition}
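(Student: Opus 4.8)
The plan is to reduce both inequalities to a two‑sided estimate for the common ratio of the progression $\bigl(\lambda_{n,j}(\lambda)\bigr)_{1\le j\le n+2}$. Put $\rho:=\lambda_{n,1}(\lambda)/\lambda_{n,2}(\lambda)$. By \eqref{eq:quot} every consecutive ratio $\lambda_{n,j}(\lambda)/\lambda_{n,j+1}(\lambda)$ with $1\le j\le n+1$ equals $\rho$, and since $\lambda_{n,2}(\lambda)=\widehat{\lambda}_{n}(\lambda)$ by \eqref{eq:galeich} this gives
\[
\lambda_{n,j}(\lambda)=\lambda\,\rho^{-(j-1)}=\lambda\bigl(\widehat{\lambda}_{n}(\lambda)/\lambda\bigr)^{j-1},\qquad 1\le j\le n+2 .
\]
For $j=1$ the asserted chain is just $\lambda\le\lambda\le\lambda$, so assume $j\ge2$ from now on. Raising to the power $j-1$, the two inequalities in the proposition are then respectively equivalent to $\widehat{\lambda}_{n}(\lambda)/\lambda\le 1/\lambda$ and $\widehat{\lambda}_{n}(\lambda)/\lambda\ge 1/(1+\lambda)$, that is, to $\rho\ge\lambda$ and $\rho\le 1+\lambda$, i.e.\ to
\[
\frac{\lambda}{1+\lambda}\ \le\ \widehat{\lambda}_{n}(\lambda)=\lambda_{n,2}(\lambda)\ \le\ 1 .
\]
The degenerate case $\lambda=\infty$ is handled directly from the values $\lambda_{n,2}(\infty)=1$, $\lambda_{n,j}(\infty)=0$ $(j\ge3)$ recalled in Section~\ref{sscon}, which turn every inequality into an equality; so assume $\lambda\in[1/n,\infty)$ in what follows.

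The upper bound is immediate. By Roy's realization theorem quoted in Section~\ref{sscon} the regular graph in dimension $n$ with parameter $\lambda$ is induced by a genuine vector $\underline{\zeta}$, so the general estimate $\widehat{\lambda}_{n}\le 1$ from \eqref{eq:6} applies to $\widehat{\lambda}_{n}(\lambda)$; hence $\lambda_{n,j}(\lambda)=\lambda\,(\widehat{\lambda}_{n}(\lambda)/\lambda)^{j-1}\le\lambda\cdot\lambda^{1-j}=\lambda^{2-j}$.

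For the lower bound it is enough, in view of the displayed identity and $\widehat{\lambda}_{n}(\lambda)/\lambda\ge0$, to treat the single case $j=2$, i.e.\ to show $\lambda_{n,2}(\lambda)=\widehat{\lambda}_{n}(\lambda)\ge\lambda/(1+\lambda)$; once this is known, raising $\widehat{\lambda}_{n}(\lambda)/\lambda\ge 1/(1+\lambda)$ to the power $j-1$ yields $\lambda_{n,j}(\lambda)\ge\lambda/(1+\lambda)^{j-1}$ for all $j$. But $\lambda_{n,2}(\lambda)\ge\lambda/(1+\lambda)$ is precisely the case $j=2$ of Corollary~\ref{korola}, which has already been established, so I simply invoke it. (Equivalently this is the bound $\rho\le 1+\lambda$, which can also be read off from the explicit recursion for the regular graph derived in Section~\ref{graph}.) Finally the last assertion of the proposition is trivial: for $j\ge3$ the exponent $2-j$ is negative, so the bound $\lambda_{n,j}(\lambda)\le\lambda^{2-j}$ just proved forces $\lambda_{n,j}(\lambda)\to 0$ as $\lambda\to\infty$.

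The only step with genuine content is the lower bound, equivalently the inequality $\rho\le 1+\lambda$ (equivalently $\widehat{\lambda}_{n}(\lambda)\ge\lambda/(1+\lambda)$); I expect this to be the main obstacle, and it is supplied by Corollary~\ref{korola}, hence ultimately by the monotonicity Proposition~\ref{propodil} together with the identification of $\lim_{n\to\infty}\lambda_{n,j}(\lambda)$. Everything else is routine manipulation of the geometric progression \eqref{eq:quot} and the elementary bound $\widehat{\lambda}_{n}\le 1$.
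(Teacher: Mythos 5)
Your proof is correct and follows essentially the same route as the paper: both arguments reduce the claim via the constant-quotient identity to the two-sided bound $\lambda/(1+\lambda)\le\widehat{\lambda}_{n}(\lambda)\le1$, obtaining the upper bound from the elementary estimate $\widehat{\lambda}_{n}\le1$ and the lower bound by citing Corollary~\ref{korola}. (A minor wording slip: you say ``raising to the power $j-1$'' where you mean taking $(j-1)$-th roots, but the manipulation itself is sound.)
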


\begin{proof}
The left inequality was already established in Corollary~\ref{korola}. For the right estimate
observe $\widehat{\lambda}_{n}(\lambda)=\lambda_{n,2}(\lambda)\leq 1$ always holds by 
\eqref{eq:1}.
Together with the constant quotients property \eqref{eq:quot} we have 
$\lambda_{n,j}(\lambda)=\widehat{\lambda}_{n}(\widehat{\lambda}_{n}/\lambda)^{j-2}\leq \lambda^{2-j}$, which
clearly tends to $0$ for $j\geq 3$ as $\lambda\to\infty$.
\end{proof}

In particular $\lambda_{n,j}(\lambda)\thicksim \lambda^{2-j}$ for $1\leq j\leq n+2$ as $\lambda\to\infty$. Dually,
if we denote by $w_{n,j}(w)$ the constants $w_{n,j}$ for the regular graph for
the parameter $w_{n,1}=w$,
then with \eqref{eq:platz} we deduce $w_{n,j}(w)\thicksim w^{(n-j+1)/n}$ as $w\to\infty$
for $1\leq j\leq n+2$. The next theorem provides more detailed information 
on the functions $\lambda_{n,j}(\lambda)$ in \eqref{eq:zuordnung}.

\begin{theorem} \label{bergzwerg}
Let $j\geq 3$ and $n\geq j-2$ be integers. If $n\geq 2j-2$, then there exist 
$\tilde{\lambda}\in{(1/n,n)}$ with the following properties.
The regular graph in dimension $n$ with parameter $\lambda$
satisfies $\lambda_{n,j}(\lambda)>1/n$
for $\lambda\in(1/n,\tilde{\lambda})$, $\lambda_{n,j}(\lambda)=1/n$ for $\lambda\in\{1/n,\tilde{\lambda}\}$
and $\lambda_{n,j}(\lambda)<1/n$ for $\lambda\in(\tilde{\lambda},\infty]$.
If on the other hand $n\leq 2j-3$, then for all $\lambda\in{(1/n,\infty]}$ the regular graph
in dimension $n$ with parameter $\lambda$ satisfies $\lambda_{n,j}(\lambda)<1/n$.
\end{theorem}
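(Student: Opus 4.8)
The plan is to reduce everything to the behaviour of a single auxiliary function of $\lambda$. Recall from the proof of Proposition~\ref{lagegrund} that in the regular graph in dimension $n$ with parameter $\lambda$, writing $\mu=\mu(\lambda):=\widehat{\lambda}_{n}(\lambda)=\lambda_{n,2}(\lambda)$, one has by \eqref{eq:quot}
\[
\lambda_{n,j}(\lambda)=\widehat{\lambda}_{n,j-1}(\lambda)=\mu^{\,j-1}\lambda^{2-j},\qquad 1\le j\le n+2 ,
\]
and that $\mu$ is determined by $n$ and $\lambda$ through the implicit relation \eqref{eq:impgl}. Hence $\lambda_{n,j}(\lambda)>1/n$, $=1/n$ or $<1/n$ according to whether $\Phi_{j}(\lambda):=(j-1)\log\mu(\lambda)-(j-2)\log\lambda+\log n$ is positive, zero or negative; equivalently $\lambda_{n,j}(\lambda)<1/n\iff(n\mu)^{\,j-1}<(n\lambda)^{\,j-2}$. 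Since $\mu(1/n)=1/n$ we have $\Phi_{j}(1/n)=0$, and since by Remark~\ref{markenreh} $\mu(\lambda)\to 1$ as $\lambda\to\infty$ while $j\ge 3$, we have $\Phi_{j}(\lambda)\to-\infty$ as $\lambda\to\infty$. Everything follows once we understand the sign of $\Phi_{j}$ on $(1/n,\infty)$.

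First I would collect the needed facts about $\mu$ from \eqref{eq:impgl}: $\mu$ is a continuous, strictly increasing function of $\lambda$ on $[1/n,\infty]$ with $\mu(1/n)=1/n$ and $\mu(\infty)=1$ (so $\mu(\lambda)<1$ for finite $\lambda$), differentiable on $(1/n,\infty)$, and — this is the key computation — $\left.\frac{d\log\mu}{d\log\lambda}\right|_{\lambda=1/n}=\frac{n-1}{n+1}$, obtained by implicit differentiation of \eqref{eq:impgl} at the point $(\lambda,\mu)=(1/n,1/n)$, where all $\lambda_{n,j}$ equal $1/n$. Moreover I claim that $s\mapsto\log\mu(e^{s})$ is strictly concave on $(\log(1/n),\infty)$; this too should be read off from \eqref{eq:impgl}, consistent with the facts that the one-sided derivative just computed is the supremum of $\frac{d\log\mu}{d\log\lambda}$ and that this quantity tends to $0$ as $\lambda\to\infty$ (again Remark~\ref{markenreh}). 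Granting the concavity, $\Phi_{j}$, viewed as a function of $s=\log\lambda$, is strictly concave (its second derivative is $(j-1)$ times that of $\log\mu(e^{s})$), with value $0$ and one-sided derivative $(j-1)\tfrac{n-1}{n+1}-(j-2)=\tfrac{n-2j+3}{n+1}$ at $s=\log(1/n)$.

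Then the dichotomy is immediate. If $n\le 2j-3$, then $\tfrac{n-2j+3}{n+1}\le 0$, and strict concavity together with $\Phi_{j}(\log(1/n))=0$ yields the tangent bound $\log\mu(\lambda)\le\log\tfrac1n+\tfrac{n-1}{n+1}\log(n\lambda)$, i.e. $(n\mu)^{\,j-1}\le(n\lambda)^{(j-1)(n-1)/(n+1)}\le(n\lambda)^{\,j-2}$ for $\lambda\ge 1/n$ (the last step since $n\lambda\ge 1$ and $(j-1)(n-1)\le(j-2)(n+1)$), with strict inequality for $\lambda>1/n$; hence $\lambda_{n,j}(\lambda)<1/n$ throughout $(1/n,\infty]$ (at $\lambda=\infty$, $\lambda_{n,j}(\infty)=0<1/n$ by Proposition~\ref{lagegrund}). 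If $n\ge 2j-2$, then $\tfrac{n-2j+3}{n+1}>0$, so $\Phi_{j}>0$ just to the right of $\lambda=1/n$; being strictly concave in $s$ with $\Phi_{j}\to-\infty$, the function $\Phi_{j}$ has on $[1/n,\infty)$ exactly the two zeros $\lambda=1/n$ and some $\tilde\lambda>1/n$, is positive on $(1/n,\tilde\lambda)$ and negative on $(\tilde\lambda,\infty)$; translating back gives the stated behaviour of $\lambda_{n,j}$, with $\lambda_{n,j}(\infty)=0<1/n$ as above. Finally $\tilde\lambda<n$, since $\lambda_{n,j}(n)=\mu(n)^{\,j-1}n^{2-j}<n^{2-j}\le 1/n$ for $j\ge 3$ (using $\mu(n)<1$), so $\Phi_{j}(n)<0$; and $\tilde\lambda>1/n$ is clear, whence $\tilde\lambda\in(1/n,n)$.

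The routine parts are the reduction via Proposition~\ref{lagegrund} and the endpoint evaluations; the real work, and the step I expect to be the main obstacle, is extracting from \eqref{eq:impgl} the qualitative behaviour of $\mu(\lambda)$ — concretely the value $\tfrac{n-1}{n+1}$ of its logarithmic derivative at $\lambda=1/n$ (which is exactly what makes the threshold land at $n=2j-2$) and, above all, the strict concavity of $\log\mu$ in $\log\lambda$, which is what upgrades the local sign information at $\lambda=1/n$ to the global single-crossing statement. Should that concavity prove awkward to establish from \eqref{eq:impgl} directly, an alternative is to argue from \eqref{eq:impgl} that $\lambda\mapsto\mu(\lambda)^{\,j-1}\lambda^{2-j}$ is unimodal on $(1/n,\infty)$, equivalently that $\Phi_{j}$ has at most one zero there besides $\lambda=1/n$.
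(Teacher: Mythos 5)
Your reduction is clean and your local computations are correct: writing $\mu(\lambda)=\widehat{\lambda}_{n}(\lambda)$, the identity $\lambda_{n,j}=\mu^{j-1}\lambda^{2-j}$ and the value $\tfrac{d\log\mu}{d\log\lambda}\big|_{\lambda=1/n}=\tfrac{n-1}{n+1}$ (hence slope $\tfrac{n-2j+3}{n+1}$ of $\Phi_j$ at $\lambda=1/n$) do reproduce the threshold $n=2j-2$, and the endpoint verifications $\Phi_j(1/n)=0$, $\Phi_j\to-\infty$, $\Phi_j(n)<0$ are all fine. But the entire global structure of your argument rests on the assertion that $s\mapsto\log\mu(e^{s})$ is strictly concave on $(\log(1/n),\infty)$, and you never prove it — you explicitly flag it as "the main obstacle" and offer as a fallback the unimodality of $\lambda\mapsto\mu^{j-1}\lambda^{2-j}$, which is precisely what needs to be shown. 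That is a genuine gap, not a routine verification: $\mu$ is only defined implicitly through \eqref{eq:impgl}/\eqref{eq:notnice}, and extracting a global second-order property like log-log concavity from such an implicit relation is exactly the kind of thing that tends to be as hard as the theorem itself. (Even the first-order monotonicity of $\mu$ is only asserted, not derived.) Without that lemma the proposal establishes the correct sign of $\Phi_j$ near $\lambda=1/n$ and near $\infty$, but nothing in between.

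The paper avoids this difficulty with a decisive change of variables that you do not attempt. Substituting $\lambda_{n,j}(\lambda)=1/n$ and $\lambda=\alpha/n$ into \eqref{eq:impgl} and setting $\theta:=\alpha^{1/(j-1)}$, one finds that the level set $\{\lambda:\lambda_{n,j}(\lambda)=1/n\}$ is governed by the explicit equation $\chi_{n,j}(\theta)=n$ with $\chi_{n,j}(\theta)=\theta^{j-2}+\theta^{j-3}+\cdots+\theta^{j-1-n}$. This $\chi_{n,j}$ is a concrete sum of power functions whose strict convexity on $(0,\infty)$ is immediate, and $\chi_{n,j}(1)=n$, $\chi_{n,j}'(1)=nj-\tfrac{n^{2}+3n}{2}$ encode exactly your threshold. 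That settles the count of crossings cheaply. The remaining issue — that $\lambda_{n,j}(\lambda)-1/n$ is positive (not negative) on $(1/n,\tilde\lambda)$ — the paper handles by a separate contradiction argument comparing the two sides of \eqref{eq:impgl} through the sign of a partial derivative $\partial G/\partial y$, again using only local information near $\lambda=1/n$, never a global convexity statement about an implicit function. So your route would, if the concavity held and were proved, give a tidier picture; but as submitted it defers the hard step rather than doing it, and the paper's substitution $\theta=(n\lambda)^{1/(j-1)}$ is the idea you are missing.
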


It is easy to check the following consequence of Theorem~\ref{bergzwerg}.

\begin{corollary} \label{coronalight}
Precisely in case of $n\leq 3$ 
none of the functions $\lambda_{n,j}(\lambda)-1/n$
changes sign on $\lambda\in{(1/n,\infty)}$.
\end{corollary}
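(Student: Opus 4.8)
The plan is to reduce the statement entirely to Theorem~\ref{bergzwerg}, after first disposing by hand of the two indices $j\in\{1,2\}$ that theorem does not cover. For $j=1$ we have $\lambda_{n,1}(\lambda)=\lambda$ by the normalization in \eqref{eq:zuordnung}, so $\lambda_{n,1}(\lambda)-1/n>0$ on all of $(1/n,\infty)$ and no sign change occurs. For $j=2$, the regular-graph identity \eqref{eq:galeich} gives $\lambda_{n,2}(\lambda)=\widehat{\lambda}_{n,1}(\lambda)=\widehat{\lambda}_{n}(\lambda)$, and the Dirichlet bound \eqref{eq:dir1} forces $\widehat{\lambda}_{n}(\lambda)\geq 1/n$ for every admissible $\lambda$; hence $\lambda_{n,2}(\lambda)-1/n\geq 0$ throughout and again does not change sign. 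Thus only the indices $3\leq j\leq n+2$ can be responsible for a sign change.

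For the implication ``$n\le 3$ $\Rightarrow$ no sign change'', note that whenever $j\geq 3$ one has $2j-3\geq 3\geq n$, so the index $j$ falls into the second alternative of Theorem~\ref{bergzwerg} (the admissibility requirement $n\geq j-2$ is precisely the condition that $j$ occurs at all in \eqref{eq:zuordnung}). That alternative yields $\lambda_{n,j}(\lambda)<1/n$ for all $\lambda\in(1/n,\infty]$, so $\lambda_{n,j}(\lambda)-1/n<0$ on the whole interval $(1/n,\infty)$; no sign change. Combined with the previous paragraph, for $n\le 3$ none of the functions $\lambda_{n,j}(\lambda)-1/n$ changes sign.

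For the converse, ``$n\ge 4$ $\Rightarrow$ some sign change'', I would simply take $j=3$: then $j\geq 3$, $n\geq j-2=1$, and $n\geq 4=2j-2$, so the first alternative of Theorem~\ref{bergzwerg} applies and produces $\tilde{\lambda}\in(1/n,n)$ with $\lambda_{n,3}(\lambda)-1/n$ positive on $(1/n,\tilde{\lambda})$ and negative on $(\tilde{\lambda},\infty)$, i.e.\ a genuine sign change on $(1/n,\infty)$. Putting the two implications together yields the ``precisely'' assertion. I do not expect a real obstacle here: the whole argument is bookkeeping built on the elementary inequality $2j-3\ge n$ for $j\ge 3,\ n\le 3$, and the only point one must not overlook is that $j$ ranges over $\{1,\dots,n+2\}$ and that the endpoints $j=1,2$ need the short elementary treatment above rather than Theorem~\ref{bergzwerg}.
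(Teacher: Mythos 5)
Your proof is correct and is exactly the argument the paper intends: the paper gives no explicit proof (it merely says the corollary is "easy to check" from Theorem~\ref{bergzwerg}), and your handling of $j\in\{1,2\}$ matches what the paper records in Remark~\ref{zwergberg}, while the case split $2j-3\geq n$ versus $n\geq 2j-2$ for $j\geq 3$ is precisely the intended reduction to the two alternatives of Theorem~\ref{bergzwerg}, with $j=3$ supplying the sign change for $n\geq4$.
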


The claims of Theorem~\ref{bergzwerg} and Corollary~\ref{coronalight} are 
(to some degree) visible 
in Figure~3 for $n=8$.


\begin{figure}
\centering
 \includegraphics[width=0.5\textwidth,natwidth=650,natheight=650]{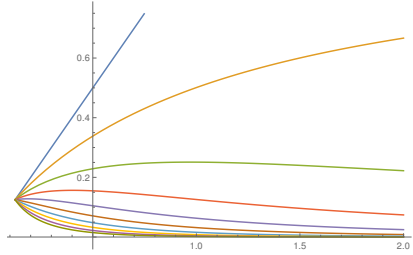}
 \caption{The functions $\lambda_{8,1}(\lambda),\ldots,
\lambda_{8,10}(\lambda)$ in the interval $\lambda\in[1/8,2]$}
\end{figure}

\begin{remark} \label{zwergberg}
For $j\in\{1,2\}$ and $n\geq 2$ clearly we have $\lambda_{n,j}(\lambda)>1/n$ for all $\lambda\in{(1/n,\infty]}$
by \eqref{eq:1} and \eqref{eq:6}, with equality in both inequalities 
only for $\lambda=1/n$. See also Proposition~\ref{propodil}. 
A similar dual argument shows $\lambda_{n,j}(\lambda)<1/n$ for $j\in\{n+1,n+2\}$, as we will carry out in the proof. 
In particular for $n=2$ it is clear that $\lambda_{2,1}(\lambda)>\lambda_{2,2}(\lambda)>1/2>
\lambda_{n,3}(\lambda)>\lambda_{n,4}(\lambda)$ for all $\lambda>1/2$, and it can be shown
easily that all functions $\lambda_{2,i}(\lambda)$ are monotonic on $[1/n,\infty]$, see also Figure~2. On the other hand, for $n=3$ the above argument is already 
too weak to imply $\lambda_{3,3}(\lambda)<1/3$ for all $\lambda>1/3$, as 
Theorem~\ref{bergzwerg} does. 
\end{remark}

Moreover it should be true that
the derivative of $\lambda_{n,j}(\lambda)$ with respect to the parameter $\lambda$ changes sign at most once, and precisely for $3\leq j< \frac{n+3}{2}$, somewhere 
in the interval $(1/n,\tilde{\lambda})$ with $\tilde{\lambda}$ from Theorem~\ref{bergzwerg}. 
However, we omit a most likely cumbersome proof. 
From Theorem~\ref{bergzwerg} it is not hard to deduce explicit examples for Schmidt's property 
if $T$ does not exceed roughly $n/2$.

\begin{theorem} \label{moshthm}
Let $n\geq 2$ be an integer. Then for any $1\leq T\leq \lfloor n/2\rfloor$ there 
exists a non-empty subinterval $I=I(T)$ of $(1/n,n)$ such that for all $\lambda\in{I}$ the 
regular graph in dimension $n$ with parameter $\lambda$ satisfies 
\[
\widehat{\lambda}_{n,T}(\lambda)>\frac{1}{n}, \qquad \lambda_{n,T+2}(\lambda)<\frac{1}{n}.
\]
In other words for any pair $(n,T)$ with $1\leq T\leq \lfloor n/2\rfloor$ there exist
$\underline{\zeta}$ that induce the regular graph and satisfy Schmidt's property for $(n,T)$.
For $T>\lfloor n/2\rfloor$ such $\underline{\zeta}$ does not exist.
\end{theorem}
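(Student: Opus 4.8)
\textbf{Proof plan for Theorem~\ref{moshthm}.}
The plan is to reduce everything to Theorem~\ref{bergzwerg} together with the regular-graph identity $\widehat{\lambda}_{n,T}(\lambda)=\lambda_{n,T+1}(\lambda)$ from \eqref{eq:galeich}. The condition to be realised, namely $\widehat{\lambda}_{n,T}(\lambda)>1/n$ and $\lambda_{n,T+2}(\lambda)<1/n$, is thus equivalent to $\lambda_{n,T+1}(\lambda)>1/n$ and $\lambda_{n,T+2}(\lambda)<1/n$. So I want an open interval of parameters $\lambda$ on which $\lambda_{n,T+1}$ lies above $1/n$ while $\lambda_{n,T+2}$ lies below it. I will handle the index $j:=T+2$ (so $j\ge 3$ since $T\ge 1$) and $j':=T+1$ separately and then intersect the resulting parameter ranges.

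First I would treat $\lambda_{n,T+2}(\lambda)<1/n$. Since $T\le\lfloor n/2\rfloor$ we have $n\ge 2T$, i.e. $n\ge 2(T+2)-4=2j-4$; but to apply the strong half of Theorem~\ref{bergzwerg} to $j=T+2$ I need $n\ge 2j-2=2T+2$, which need \emph{not} hold (e.g. $T=\lfloor n/2\rfloor$). This is exactly where the two cases of Theorem~\ref{bergzwerg} come in. If $n\le 2(T+2)-3=2T+1$, which covers $T=\lfloor n/2\rfloor$ and also $T=\lfloor n/2\rfloor-1$ when $n$ is odd, the theorem gives $\lambda_{n,T+2}(\lambda)<1/n$ for \emph{all} $\lambda\in(1/n,\infty]$, so there is no constraint from $j=T+2$. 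If instead $n\ge 2T+2$, the theorem produces $\tilde\lambda_{T+2}\in(1/n,n)$ with $\lambda_{n,T+2}(\lambda)<1/n$ precisely for $\lambda>\tilde\lambda_{T+2}$; so the constraint from $j=T+2$ is $\lambda>\tilde\lambda_{T+2}$.

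Next I would treat $\lambda_{n,T+1}(\lambda)>1/n$, i.e. $j':=T+1$. Here $j'\ge 2$. If $j'\in\{1,2\}$, i.e. $T=1$, Remark~\ref{zwergberg} already gives $\lambda_{n,2}(\lambda)>1/n$ for all $\lambda>1/n$, so no constraint. If $j'\ge 3$, i.e. $T\ge 2$, I apply Theorem~\ref{bergzwerg} with $j=T+1$. Now $n\ge 2T\ge 2(T+1)-2=2j-2$, so the strong half applies and yields $\tilde\lambda_{T+1}\in(1/n,n)$ with $\lambda_{n,T+1}(\lambda)>1/n$ exactly for $\lambda\in(1/n,\tilde\lambda_{T+1})$. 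Thus the constraint from $j=T+1$ is $\lambda<\tilde\lambda_{T+1}$ (with the convention that when $T=1$ we may take $\tilde\lambda_{T+1}=\infty$ for notational uniformity).

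Putting this together, the desired interval is $I=(\max\{1/n,\tilde\lambda_{T+2}\},\tilde\lambda_{T+1})$, with the conventions above when one of the bounds is trivial. It remains to show $I\ne\emptyset$, i.e. $\tilde\lambda_{T+2}<\tilde\lambda_{T+1}$ whenever both are defined; I expect this to be the main obstacle. The cleanest route is to observe that at $\lambda=\tilde\lambda_{T+1}$ we have $\lambda_{n,T+1}(\lambda)=1/n$, and then to use Proposition~\ref{lagegrund} or the constant-quotients property \eqref{eq:quot}: since the ratios $\lambda_{n,k}/\lambda_{n,k+1}$ all equal $\lambda/\widehat\lambda_n(\lambda)\ge 1$ (indeed $>1$ for $\lambda>1/n$ because $\widehat\lambda_n(\lambda)=\lambda_{n,2}(\lambda)\le 1<\lambda$ in the range where $\lambda>1$, and more carefully for $\lambda\in(1/n,1]$ one still has the successive minima strictly decreasing after the first repetition), the sequence $j\mapsto\lambda_{n,j}(\lambda)$ is strictly decreasing in $j$ on $(1/n,\infty)$. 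Hence at $\lambda=\tilde\lambda_{T+1}$ we get $\lambda_{n,T+2}(\lambda)<\lambda_{n,T+1}(\lambda)=1/n$, which by the monotone-crossing statement of Theorem~\ref{bergzwerg} for index $T+2$ forces $\tilde\lambda_{T+1}>\tilde\lambda_{T+2}$. This gives $I\ne\emptyset$.

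Finally, for the impossibility statement when $T>\lfloor n/2\rfloor$: then $T+2\ge\lfloor n/2\rfloor+3$, so in particular $2(T+2)-3=2T+1\ge n$ forces $\lambda_{n,T+2}(\lambda)<1/n$ never interacts — rather, one checks $2(T+1)-3\ge n$ when $T\ge\lceil (n+2)/2\rceil$, so by the weak half of Theorem~\ref{bergzwerg} applied to $j=T+1$ we get $\lambda_{n,T+1}(\lambda)<1/n$ for all $\lambda>1/n$, i.e. $\widehat\lambda_{n,T}(\lambda)<1/n$; thus Schmidt's property cannot hold for the regular graph. The borderline value $T=\lfloor n/2\rfloor+1$ with $n$ even (where $2(T+1)-3=n-1<n$) needs a separate argument: here one shows directly, e.g. via the explicit bound $\lambda_{n,j}(\lambda)\le\lambda^{2-j}$ of Proposition~\ref{lagegrund} applied with $j=T+1$ and the fact that $T+1\ge 3$ together with $\lambda\le\tilde\lambda_{T+2}\le n$ being incompatible with $\lambda_{n,T+1}(\lambda)>1/n$ — this case I expect to require the most care and is the secondary obstacle. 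Alternatively, and more conceptually, the impossibility for $T>\lfloor n/2\rfloor$ follows directly from the last sentence of Theorem~\ref{bergzwerg}: for $j=T+2$ the condition $n\le 2j-3$ reads $n\le 2T+1$, i.e. $T\ge(n-1)/2$, and for $j=T+1$ it reads $n\le 2T-1$, i.e. $T\ge(n+1)/2$, so when $T\ge(n+1)/2$, equivalently $T>\lfloor n/2\rfloor$ for $n$ even and $T\ge\lceil n/2\rceil$ for $n$ odd, Theorem~\ref{bergzwerg} with $j=T+1$ gives $\lambda_{n,T+1}(\lambda)<1/n$ throughout; the remaining subcase $n$ odd, $T=\lfloor n/2\rfloor=(n-1)/2$ is \emph{included} in the existence part, so in fact $T>\lfloor n/2\rfloor$ is precisely the complement, completing the proof. \qed
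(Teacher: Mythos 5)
Your proposal is correct and uses essentially the same ingredients as the paper's proof: Theorem~\ref{bergzwerg}, the regular-graph identity $\widehat{\lambda}_{n,T}=\lambda_{n,T+1}$, and the strict decrease of $\lambda_{n,j}(\lambda)$ in $j$ for $\lambda>1/n$. The only organizational difference is that you invoke Theorem~\ref{bergzwerg} for both indices $T+1$ and $T+2$ and intersect the two parameter ranges (which forces you to prove $\tilde{\lambda}_{T+2}<\tilde{\lambda}_{T+1}$), whereas the paper applies it only at $j=T+1$ to locate the threshold $\tilde{\lambda}$ where $\lambda_{n,T+1}(\tilde{\lambda})=1/n$, notes $\lambda_{n,T+2}(\tilde{\lambda})<1/n$ by strict decrease, and then takes a left neighbourhood $(\tilde{\lambda}-\epsilon,\tilde{\lambda})$ by continuity; this is a bit shorter and avoids the separate case split on whether $n\geq 2T+2$. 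Two minor points: your strict-decrease justification for $\lambda\in(1/n,1]$ is hand-wavy as written (the clean argument is that $\widehat{\lambda}_{n}=\lambda_{n}$ would force all the constants, including $\widehat{\lambda}_{n,n+1}\leq 1/n$, to equal $\lambda_{n}>1/n$, a contradiction), and your initial worry about a borderline case $T=\lfloor n/2\rfloor+1$ with $n$ even was based on an arithmetic slip ($2(T+1)-3=n+1$, not $n-1$) that your ``alternatively'' paragraph correctly overrides: $T>\lfloor n/2\rfloor$ is exactly $T\geq(n+1)/2$, so the weak half of Theorem~\ref{bergzwerg} applies to $j=T+1$ with no exceptional case.
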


\begin{proof}
First let $3\leq j\leq \lfloor n/2\rfloor+1$. 
Then the first case of Theorem~\ref{bergzwerg} applies
and yields $\lambda_{n,j}(\tilde{\lambda})=1/n$ and $\lambda_{n,j}(t)>1/n$ for
some $\tilde{\lambda}>1/n$ and $t\in{(1/n,\tilde{\lambda})}$. 
Since $\lambda_{n,j+1}<\lambda_{n,j}$ unless both are equal to $\lambda=1/n$, we
have $\lambda_{n,j+1}(\tilde{\lambda})<1/n$. Hence by continuity of the function 
$\lambda_{n,j+1}(\lambda)$ in the parameter $\lambda$, there exists 
some non-empty interval $J=J(j)=(\delta-\epsilon,\delta)$ such that for 
$t_{0}\in{J}$ the inequalities $\lambda_{n,j+1}(t_{0})<1/n<\lambda_{n,j}(t_{0})$ are satisfied. 
Since in the regular graph $\widehat{\lambda}_{n,j-1}=\lambda_{n,j}$ holds by \eqref{eq:galeich}, 
the claim follows for $T\geq 2$ with $T=j-1$, and the fact that
$\underline{\zeta}$ inducing the corresponding regular graphs exist as mentioned above.
For $T=1$, a very similar argument applies with $j=2$. We may take any value $\lambda$ 
sufficiently large that $\lambda_{n,3}(\lambda)-1/n<0$,
observing $\lambda_{n,2}(\lambda)>1/n$ for $\lambda>1/n$
but $\lambda_{n,3}(\lambda)-1/n$ changes sign somewhere in $(1/n,n)$. 
Finally, concerning the claim for $T>\lfloor n/2\rfloor$, 
suitable $\underline{\zeta}$ cannot exist
since $\lambda_{n,T+1}(\lambda)=\widehat{\lambda}_{n,T}(\lambda)<1/n$ 
for all $\lambda>1/n$ by the last claim of Theorem~\ref{bergzwerg}.
\end{proof}

\begin{remark}
For $T<n/e$ the claim concerning Schmidt's property
could be derived directly from Proposition~\ref{lagegrund} instead 
of the deeper Theorem~\ref{bergzwerg},
where $e$ is Euler's number. 
\end{remark}

\section{Implications of Conjecture~\ref{schmsum} for uniform approximation} \label{implic}

In this section we restrict to the case of 
successive powers $(\zeta,\zeta^{2},\ldots,\zeta^{n})$.
We will write $w_{n,j}(\zeta)$ for $w_{n,j}(\zeta,\zeta^{2},\ldots,\zeta^{n})$ 
and similarly for $\widehat{w}_{n,j}(\zeta),\lambda_{n,j}(\zeta),\widehat{\lambda}_{n,j}(\zeta)$.
We will also consider related constants connected
to approximation by algebraic numbers. For a given
real number $\zeta$, let $w_{n}^{\ast}(\zeta)$ be
the supremum of $\nu$ such that
\[
0<\vert \zeta-\alpha\vert \leq H(\alpha)^{-\nu-1}
\]
has infinitely many
real algebraic solutions $\alpha$ of degree at most $n$. Here $H(\alpha)$ is the 
height of the irreducible minimal polynomial $P$ of $\alpha$ over $\mathbb{Z}[X]$,
which is the maximum modulus among its coefficients. 
Similarly let the uniform constant $\widehat{w}_{n}^{\ast}(\zeta)$ 
be the supremum of real $\nu$ for which the system
\[
H(\alpha)\leq X, \qquad 0<\vert \zeta-\alpha\vert \leq H(\alpha)^{-1}X^{-\nu}
\]
has a solution as above for all large values of $X$.
For all $n\geq 1$ and all real $\zeta$, the estimates 
\begin{equation} \label{eq:fischers}
w_{n}^{\ast}(\zeta)\leq w_{n}(\zeta)\leq w_{n}^{\ast}(\zeta)+n-1, \qquad 
\widehat{w}_{n}^{\ast}(\zeta)\leq \widehat{w}_{n}(\zeta)\leq \widehat{w}_{n}^{\ast}(\zeta)+n-1,
\end{equation}
are well-known, see \cite[Lemma~A8]{bugbuch}.
We aim to establish a conditional improvement of the known upper bound for the exponents
$\widehat{w}_{n}(\zeta), \widehat{w}_{n}^{\ast}(\zeta)$ valid for all transcendental real $\zeta$, under the assumption of Conjecture~\ref{schmsum}. The bound
$\widehat{w}_{n}(\zeta)\leq 2n-1$ was given by Davenport and Schmidt~\cite{davsh}.
This has recently been refined in~\cite[Theorem 2.1]{buschl} to
\begin{equation} \label{eq:tomcat3}
\widehat{w}_{n}(\zeta)\leq  
 n - \frac{1}{2} + \sqrt{n^{2}-2n+ \frac{5}{4}}.
\end{equation}
For large $n$ the right hand side in \eqref{eq:tomcat3}
is of order $2n-3/2+o(1)$.
For $n=3$, the stronger estimate
\begin{equation} \label{eq:tomcat4}
\widehat{w}_{3}(\zeta)\leq 3+\sqrt{2}\approx 4.4142...
\end{equation}
was established in~\cite[Theorem 2.1]{buschl}.
For $n=2$, the bound in~\eqref{eq:tomcat3} is best possible 
as proved by Roy, see \cite{royroy}.
Our main result of this section is the following asymptotic estimation,
conditioned on Conjecture~\ref{schmsum}.

\begin{theorem} \label{reggraph}
	Suppose Conjecture~\ref{schmsum} holds for every $n\geq 2$. 
	Let $\tau\approx 0.5693$ be the solution $y\in{(0,1)}$ of $ye^{1/y}=2\sqrt{e}$, where $e$ is Euler's number,
	and put $\Delta:=\log(2/\tau)+1\approx 2.2564$. Then for any $\epsilon>0$ there
	exists $n_{0}=n_{0}(\epsilon)$ such that for all real transcendental numbers $\zeta$ we have
	\begin{equation} \label{eq:ohne}
	\widehat{w}_{n}^{\ast}(\zeta)\leq 2n-\Delta+\epsilon, \qquad n\geq n_{0}.
	\end{equation}
	The same bound holds for $\widehat{w}_{n}(\zeta)$ unless $w_{n-2}(\zeta)<w_{n-1}(\zeta)=w_{n}(\zeta)$.
	In any case we have
	\begin{equation} \label{eq:mit}
	\widehat{w}_{n}(\zeta)\leq 2n-2, \qquad n\geq 10.
	\end{equation}
\end{theorem}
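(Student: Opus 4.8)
The plan is to leverage the Schmidt--Summerer conjecture to reduce the problem of bounding $\widehat{w}_n(\zeta)$ to a purely structural computation on the regular graph, namely to estimating the function $\phi_n(w)$ from Definition~\ref{lator} (equivalently $\vartheta_n$) for large $n$. Under Conjecture~\ref{schmsum} we have $\widehat{w}_n(\zeta)\le\phi_n(w_n(\zeta))$, so it suffices to understand $\sup_{w\ge n}\phi_n(w)$. By the duality \eqref{eq:platz} together with the constant-quotient relation \eqref{eq:quot} and \eqref{eq:galeich}, the regular graph in dimension $n$ is governed by a single parameter; translating the asymptotics of Remark~\ref{markenreh} and Corollary~\ref{korola} into the $w$-picture (recall $w_{n,j}(w)\thicksim w^{(n-j+1)/n}$ as $w\to\infty$, noted after Proposition~\ref{lagegrund}) should give $\phi_n(w)$ explicitly in the relevant range. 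I would first compute, for each fixed ratio $\rho=\lambda_n/\widehat\lambda_n$, the corresponding pair $(w_n,\widehat w_n)$ on the regular graph, obtaining a parametrized curve; then optimize over $\rho$ (or over $w$) to locate where $\widehat w_n$ is largest.

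The second step is the optimization that produces the constant $\Delta$. The quantity $\tau$ is defined as the root in $(0,1)$ of $ye^{1/y}=2\sqrt e$, and $\Delta=\log(2/\tau)+1$; these transcendental expressions strongly suggest that the extremal configuration occurs when the parameter $\lambda$ (or $w$) grows like a specific power or exponential in $n$, so that after taking $\log$ and dividing by $n$ a limiting one-variable optimization emerges. Concretely I expect that writing $w=e^{cn}$ (or $\lambda$ of comparable size) and computing $\widehat w_n = 2n - g(c) + o(n)$ for an explicit $g$, the minimum of $g$ over admissible $c$ equals $\Delta$, with the minimizer corresponding to $c=1/\tau$ or similar. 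The inequality \eqref{eq:ohne} for $\widehat w_n^\ast$ would then follow for $\widehat w_n$ directly, and for $\widehat w_n^\ast$ via the first relation in \eqref{eq:fischers} (since $\widehat w_n^\ast \le \widehat w_n$, the starred bound is actually the easier one and holds unconditionally once the unstarred one does); the stated exceptional case $w_{n-2}(\zeta)<w_{n-1}(\zeta)=w_n(\zeta)$ must be where the reduction from $\widehat w_n$ to the regular-graph bound breaks, presumably a degenerate successive-minima configuration that has to be excised and handled — or excluded — separately.

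For the unconditional bound \eqref{eq:mit}, namely $\widehat w_n(\zeta)\le 2n-2$ for $n\ge 10$, I would not use Conjecture~\ref{schmsum} at all; instead I expect this to come from combining the known refinement \eqref{eq:tomcat3} of Davenport--Schmidt with an inductive or transference argument in $n$. Since $2n-\tfrac12+\sqrt{n^2-2n+\tfrac54}$ is asymptotically $2n-\tfrac32+o(1)$ but exceeds $2n-2$ for small $n$, one checks directly that $n-\tfrac12+\sqrt{n^2-2n+\tfrac54}\le 2n-2$ precisely once $n$ is large enough — solving the resulting quadratic inequality in $n$ should pin down the threshold $n=10$ — so \eqref{eq:mit} is likely just \eqref{eq:tomcat3} made explicit in the stated range, requiring only a routine verification that the square-root term is at most $n-\tfrac32$ for $n\ge 10$.

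The main obstacle I anticipate is the first step: obtaining a clean closed form (or sharp asymptotic) for $\phi_n(w)$ on the regular graph. The recursion \eqref{eq:quot}--\eqref{eq:galeich} determines all $\lambda_{n,j}$ from $\lambda_n$ implicitly through the Minkowski-type normalization that $\sum_j L_{n,j}$ is bounded (the relation labeled \eqref{eq:impgl} promised in Section~\ref{graph}), and extracting from it an expression amenable to the $n\to\infty$ optimization — keeping careful track of which index $j$ the constraint $\widehat w_n=w_{n,?}$ corresponds to, and ensuring the error terms are uniform in $\zeta$ — is where the real work lies. Everything downstream (the optimization yielding $\tau$ and $\Delta$, the transfer to $\widehat w_n^\ast$, and the unconditional $n\ge 10$ estimate) should be comparatively mechanical.
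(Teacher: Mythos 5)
Your proposal has a fatal gap right at the first step: you claim that under Conjecture~\ref{schmsum} it suffices to understand $\sup_{w\ge n}\phi_n(w)$, but this supremum is infinite. Since $\phi_n$ is increasing and, as noted after Proposition~\ref{lagegrund}, $\widehat w_n=w_{n,2}(w)\thicksim w^{(n-1)/n}$ as $w\to\infty$, one has $\phi_n(w)\to\infty$. So the conjectured inequality $\widehat w_n(\zeta)\le\phi_n(w_n(\zeta))$ by itself yields no bound of the form $2n-\Delta$. The missing ingredient, which your outline never mentions, is the unconditional estimate of Bugeaud--Schleischitz (Theorem~\ref{zamgfasst}), namely $\widehat w_n^{\ast}(\zeta)\le nw_n(\zeta)/(w_n(\zeta)-n+1)$ and its unstarred variants. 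That quantity \emph{decreases} in $w_n(\zeta)$, and the whole argument is a crossing-point analysis: take the minimum of the increasing $\phi_n$ and the decreasing rational bound, and evaluate near the value $\kappa_n=(2n-D)(n-1)/(n-D)$ where the rational bound equals $2n-D$. The constants $\tau$ and $\Delta$ come out of estimating $\phi_n(\kappa_n)$ via \eqref{eq:j1gl1}--\eqref{eq:j1gl2} and the asymptotics $\varphi_n(\kappa_n)=(2\sqrt e+o(1))n$, not from any exponential reparametrization $w=e^{cn}$; the extremizer does not move off to infinity in $w$. Likewise, the exceptional case $w_{n-2}(\zeta)<w_{n-1}(\zeta)=w_n(\zeta)$ is not an ad hoc degenerate configuration to be ``excised''; it is exactly the regime not covered by the hypotheses of \eqref{eq:nudelrum} and \eqref{eq:mundln} in Theorem~\ref{zamgfasst}, which distinguish $\widehat w_n$ from $\widehat w_n^\ast$.

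Your proposal for \eqref{eq:mit} is simply false. The bound \eqref{eq:tomcat3}, $\widehat w_n\le n-\tfrac12+\sqrt{n^2-2n+\tfrac54}$, satisfies $n-\tfrac12+\sqrt{n^2-2n+\tfrac54}\le 2n-2$ if and only if $n^2-2n+\tfrac54\le n^2-3n+\tfrac94$, i.e.\ $n\le 1$. For every $n\ge 2$ the bound \eqref{eq:tomcat3} is \emph{weaker} than $2n-2$ (it is asymptotically $2n-\tfrac32$, not $2n-2$), so \eqref{eq:mit} cannot be deduced from it, and indeed \eqref{eq:mit} in the paper is \emph{also} conditional on Conjecture~\ref{schmsum}: it uses \eqref{eq:japp} together with \eqref{eq:pasta} and a direct verification that $\phi_n\bigl(2(n-1)^2/(n-2)\bigr)\le 2n-2$ for $n\ge 10$. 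You will need to incorporate Theorem~\ref{zamgfasst} into the argument from the start to repair the proposal.
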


Furthermore, in Section~\ref{4proofs} we will derive conditioned concrete 
upper bounds for $\widehat{w}_{n}(\zeta), \widehat{w}_{n}^{\ast}(\zeta)$ 
for certain values of $n$, see \eqref{eq:kanada} below. 
We close this section with another related result, whose proof
will be omitted as it is very similar to that of Theorem~\ref{reggraph}.
Assume that the estimate
\begin{equation} \label{eq:glueck}
\widehat{w}_{n}\leq n^{\frac{1}{n+1}}w_{n}^{\frac{n}{n+1}}
\end{equation}
is satisfied. 
Then for every $\epsilon>0$ there exists $n_{0}=n_{0}(\epsilon)$ such that
\begin{equation} \label{eq:wegweg}
\widehat{w}_{n}(\zeta)\leq 2n-1-\log 2+\epsilon, \qquad n\geq n_{0}.
\end{equation}
Observe that \eqref{eq:wegweg} is still stronger than \eqref{eq:tomcat3}, although it is
weaker than \eqref{eq:ohne}. On the other hand,
we will see in Section~\ref{graph} that the involved assumption \eqref{eq:glueck} is 
reasonably weaker than the assumption of Conjecture~\ref{schmsum} in Theorem~\ref{reggraph}.

\section{Conditioned results under assumption of another conjecture} \label{andere}

\subsection{Uniform approximation}
Let $n\geq 1$ an integer and $\zeta$ a real number.
We call $P\in{\mathbb{Z}[X]}$ of degree at most $n$
a best approximation for $(n,\zeta)$ if there is no $Q\in{\mathbb{Z}[X]}$
of degree at most $n$ with strictly smaller height
 $H(Q)<H(P)$ that satisfies $\vert Q(\zeta)\vert<\vert P(\zeta)\vert$. Obviously
every real transcendental $\zeta$ induces 
a sequence of best approximation polynomials $P_{1},P_{2},\ldots$ with
$\vert P_{1}(\zeta)\vert>\vert P_{2}(\zeta)\vert>\cdots$ and $H(P_{1})\leq H(P_{2})\leq \cdots$.
Similarly for $\underline{\zeta}=(\zeta_{1},\ldots,\zeta_{n})$ define best
approximations for $(n,\underline{\zeta})$ for the linear forms in $\underline{\zeta}$.

\begin{conjecture} \label{bestapp}
For any $n\geq 1$ and any real transcendental $\zeta$, there exist infinitely many $k$ such that 
$n+1$ successive best approximations $P_{k},P_{k+1},\ldots,P_{k+n}$ for $(n,\zeta)$ 
are linearly independent (i.e. the coefficient vectors span the entire space $\mathbb{R}^{n+1}$).
\end{conjecture}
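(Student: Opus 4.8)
The plan is to recast the statement inside the parametric geometry of numbers of Section~\ref{param}, applied to the vector $\underline{\zeta}=(\zeta,\zeta^{2},\ldots,\zeta^{n})$, and to exhibit arbitrarily large parameters $q$ at which the first $n+1$ successive minima of the associated parametric family of convex bodies are carried by $n+1$ consecutive best approximation polynomials. First I would fix the dictionary between best approximations and lattice points: writing $P_{k}(X)=a_{0}^{(k)}+a_{1}^{(k)}X+\cdots+a_{n}^{(k)}X^{n}$, the coefficient vectors $\underline{z}_{k}=(a_{0}^{(k)},\ldots,a_{n}^{(k)})\in\mathbb{Z}^{n+1}$ are primitive (otherwise a proper integer divisor of $P_{k}$ would be a strictly better approximation) and, up to bounded multiplicative factors, they are precisely the points that successively realize the first minimum, in the sense that there are parameters $q_{k}\asymp\log H(P_{k})$ with $\underline{z}_{k}$ attaining $L_{n,1}(q)$ for $q\in[q_{k},q_{k+1}]$. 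Since for transcendental $\zeta$ no two best approximations are proportional, consecutive $\underline{z}_{k},\underline{z}_{k+1}$ are linearly independent; this already settles $n=1$, and the whole difficulty lies in propagating independence to $n+1$ vectors for larger $n$.

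Next I would fix a large $q$, let $\underline{z}_{k}$ be the best approximation active at $q$ (so $\underline{z}_{k}$ carries $L_{n,1}(q)$), and try to show that, after possibly replacing $q$ by a nearby $q'$, the higher minima $L_{n,2}(q'),\ldots,L_{n,n+1}(q')$ are realized (again up to bounded factors) by the preceding best approximations $\underline{z}_{k-1},\underline{z}_{k-2},\ldots,\underline{z}_{k-n}$. The mechanism I have in mind is a \emph{cascade}: at a parameter where the combined graph of the $L_{n,j}$ has just undergone a re-ordering --- and here one invokes \cite[Theorem~1.1]{ss}, which guarantees that for each $1\le j\le n$ there are arbitrarily large $q$ with $L_{n,j}(q)=L_{n,j+1}(q)$, together with the local shape of the combined graph around such coincidence points --- the lattice point carrying the $j$-th minimum is pushed up to carry the $(j+1)$-st one. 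Running this re-ordering down through $j=1,2,\ldots,n$ at a single scale of $q$ would identify $n+1$ of the successive minima with $n+1$ consecutive best approximations, and since lattice points realizing all $n+1$ successive minima are by definition linearly independent, independence of $P_{k},\ldots,P_{k+n}$ for the corresponding $k$ would follow.

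The main obstacle --- and the reason the statement is only a conjecture --- is exactly the control needed in the previous step once $n\ge 3$. One has to rule out that some successive minimum at the relevant parameter is attained by a lattice point that is \emph{not} among the last $n+1$ best approximations: for instance an old best approximation whose norm in the convex body happens to remain small because the approximation spectrum of $\zeta$ is very uneven, or a short primitive point of a proper sublattice spanned by non-consecutive $\underline{z}_{k}$. One also has to ensure that the successive re-orderings of the combined graph can be forced to occur at a common scale of $q$ rather than at scales that drift apart. Quantitatively tracking the combined graph at this level of precision seems to go beyond what is currently available; a reasonable route would be to isolate the case $n=2$ first, where the combined graph has only three branches and is completely understood, and then attempt an induction on $n$ that peels off the top minimum while maintaining the cascade for the remaining ones.
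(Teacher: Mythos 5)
The statement you were asked to prove is Conjecture~\ref{bestapp}: the paper does not prove it, but states it as an open conjecture and only derives consequences from it in Theorems~\ref{zwischen} and~\ref{anderer}. You correctly recognize this, and your write-up is an honest sketch of a natural strategy together with the reasons it cannot be completed, rather than a proof; on that basis there is no paper proof to compare against. Your dictionary between best approximation polynomials and primitive lattice points is standard and correct, and $n=1$ is indeed trivial. The paper records the known partial results in Remark~\ref{markenrehe}: the conjecture holds for $n=2$, and for arbitrary $n$ one always gets three linearly independent consecutive best approximations infinitely often, by~\cite{ss}.

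The one substantive weakness in your cascade sketch is that, as written, it makes no use of the fact that $\underline{\zeta}$ lies on the Veronese curve $(\zeta,\zeta^{2},\ldots,\zeta^{n})$: the coincidence points $L_{n,j}(q)=L_{n,j+1}(q)$ from~\cite[Theorem~1.1]{ss} that you invoke, and the local re-ordering of the combined graph near them, are available for every $\underline{\zeta}\in\mathbb{R}^{n}$ that is $\mathbb{Q}$-linearly independent together with $\{1\}$. But Moshchevitin~\cite{mosh} constructed such vectors for $n>2$ for which the analogous independence statement is \emph{false}: the $(n+1)\times(n+1)$-matrix whose columns are $n+1$ consecutive best approximation vectors can be forced to have rank at most $3$ for all large $k$ (see Remark~\ref{markenrehe}). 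So any argument of the type you sketch is not merely hard to control quantitatively; it is doomed unless at some point it genuinely exploits the polynomial structure of $(\zeta,\zeta^{2},\ldots,\zeta^{n})$, and you never indicate where that should enter. This pins down ``the reason the statement is only a conjecture'' more sharply than the quantitative obstructions you list.
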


\begin{remark} \label{markenrehe}
The claim is known to hold for $n=2$. More generally, for any $n$ there 
are three linearly independent 
consecutive best approximations infinitely often, see~\cite{ss}. On the
other hand, Moshchevitin~\cite{mosh}
proved the existence of counterexamples for the analogous claim for
vectors $\underline{\zeta}\in{\mathbb{R}^{n}}$ that are
$\mathbb{Q}$-linearly independent together with $\{1\}$, for $n>2$. Vectors can even be chosen
such that the $(n+1)\times (n+1)$-matrix whose columns are
formed by $n+1$ successive best approximation
vectors has rank at most $3$ for all large $k$.
However, it seems plausible that such vectors cannot lie on the Veronese curve.
\end{remark}

\begin{theorem} \label{zwischen}
For any $n\geq 2$ and any real vector $\underline{\zeta}$ linearly independent over $\mathbb{Q}$
together with $\{1\}$, we have
\begin{equation} \label{eq:fastfertig}
w_{n,3}(\underline{\zeta})\geq \frac{\widehat{w}_{n}(\underline{\zeta})^{2}}{w_{n}(\underline{\zeta})}.
\end{equation}
If $(n,\underline{\zeta})$ satisfies the assumption of Conjecture~\ref{bestapp} then 
\begin{equation} \label{eq:heute} 
w_{n,i}(\underline{\zeta})\geq 
\frac{\widehat{w}_{n}(\underline{\zeta})^{i-1}}{w_{n}(\underline{\zeta})^{i-2}}, \qquad 1\leq i\leq n+1,
\end{equation}
and
\begin{equation} \label{eq:99lb}
w_{n}(\underline{\zeta})\geq 
\widehat{w}_{n}(\underline{\zeta})\left(\frac{\widehat{w}_{n}(\underline{\zeta})-1}{n-1}\right)^{\frac{1}{n-1}}.
\end{equation}
Analogous claims of \eqref{eq:fastfertig} and \eqref{eq:heute}
hold for $\lambda_{n,j},\widehat{\lambda}_{n,j}$ with respect to the obvious 
dual definition of the best approximations and Conjecture~\ref{bestapp}, and \eqref{eq:99lb}
has to be replaced by 
\begin{equation} \label{eq:feierabend}
\lambda_{n}(\underline{\zeta})\geq 
\widehat{\lambda}_{n}(\underline{\zeta})\cdot 
\left(\frac{(n-1)\widehat{\lambda}_{n}(\underline{\zeta})}{1-\widehat{\lambda}_{n}(\underline{\zeta})}\right)^{\frac{1}{n-1}}.
\end{equation}
\end{theorem}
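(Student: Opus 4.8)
The plan is to work with the sequence of best approximation polynomials (resp. vectors) $P_1,P_2,\ldots$ and the associated two-parameter data $H_k := H(P_k)$ and $L_k := |P_k(\underline{\zeta})|$ (in the simultaneous setting, the analogous height/error pair). The exponents $w_n, \widehat w_n, w_{n,i}$ are read off from these sequences in the usual way: $w_n = \limsup_k \frac{-\log L_k}{\log H_{k+1}}$ and $\widehat w_n = \liminf_k \frac{-\log L_k}{\log H_{k+1}}$, while $w_{n,i}$ is controlled by the growth of heights of $i$-tuples of linearly independent best approximations. For \eqref{eq:fastfertig}, the key input is the known fact (cited in Remark~\ref{markenrehe}, from~\cite{ss}) that three consecutive best approximations $P_k,P_{k+1},P_{k+2}$ are linearly independent for infinitely many $k$. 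First I would fix such a $k$ with $\frac{-\log L_k}{\log H_{k+1}}$ close to $\widehat w_n$ (using the $\liminf$ definition, this holds eventually along the whole sequence), and then chain two steps: from $k$ to $k+1$ the uniform exponent forces $L_k \le H_{k+1}^{-\widehat w_n + o(1)}$, and from $k+1$ onward the ordinary exponent gives $H_{k+2} \le H_{k+1}^{w_n/\widehat w_n + o(1)}$ by comparing $-\log L_{k+1}$ against both $\log H_{k+2}$ (via $w_n$) and $\log H_{k+1}$ (via $\widehat w_n$, since $L_{k+1} < L_k$). The triple $P_k,P_{k+1},P_{k+2}$ then certifies $w_{n,3} \ge \frac{-\log L_k}{\log H_{k+2}} \ge \frac{\widehat w_n \log H_{k+1}}{(w_n/\widehat w_n)\log H_{k+1}} = \widehat w_n^2/w_n$, after letting $k\to\infty$.

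For \eqref{eq:heute}, the plan is an induction on $i$ that iterates exactly this mechanism, now using the full strength of Conjecture~\ref{bestapp}: for infinitely many $k$, the $n+1$ consecutive best approximations $P_k,\ldots,P_{k+n}$ are linearly independent. Assuming the conjecture, I would pick such a $k$ with the uniform ratio near $\widehat w_n$, then show by a telescoping argument that each height step satisfies $\log H_{k+m+1} \le (w_n/\widehat w_n + o(1))\log H_{k+m}$, so that $\log H_{k+i-1} \le (w_n/\widehat w_n)^{i-2}\log H_{k+1}(1+o(1))$; the linearly independent tuple $P_k,\ldots,P_{k+i-1}$ then gives $w_{n,i} \ge \frac{-\log L_k}{\log H_{k+i-1}} \ge \widehat w_n \cdot (\widehat w_n/w_n)^{i-2} = \widehat w_n^{i-1}/w_n^{i-2}$. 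The step I expect to require the most care is the uniform control of the height ratios $\log H_{k+m+1}/\log H_{k+m}$ simultaneously for all $m=1,\ldots,n-1$ from a single "good" index $k$: one must ensure that having the uniform ratio near $\widehat w_n$ at $k$ propagates, which uses that $L_{k+m}$ is decreasing and that the uniform exponent $\widehat w_n$ is, by definition, a lower bound on $\frac{-\log L_{k+m}}{\log H_{k+m+1}}$ for all large indices, not just the chosen one — so in fact only the first ratio needs the "good $k$" and the rest follow from the liminf.

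Finally, \eqref{eq:99lb} follows by combining \eqref{eq:heute} at $i = n+1$ with Minkowski's second theorem: the product (or sum of logs) of heights of $n+1$ linearly independent best approximations, weighted against the common error level, is bounded, which classically translates into $w_{n,n+1} \le 1$ (equivalently the $L_{n,j}$ sum bound quoted after \eqref{eq:lbedingung}, via the duality \eqref{eq:platz}: $w_{n,n+1} = 1/\widehat\lambda_{n,1} \cdot$ the relevant normalization, so $w_{n,n+1}$ is bounded above). Plugging the lower bound $w_{n,n+1} \ge \widehat w_n^{n}/w_n^{n-1}$ into the upper bound $w_{n,n+1}\le$ (the constant dictated by Minkowski, which one computes to be $\le \frac{n-1}{\widehat w_n - 1}\cdot$ something) and solving for $w_n$ yields the stated inequality; the cleanest route is to use the precise relation $w_{n,n+1}\widehat w_n \le \ldots$ coming from the second theorem applied to the parametrized bodies. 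The dual statements \eqref{eq:feierabend} and the $\lambda$-analogues of \eqref{eq:fastfertig}--\eqref{eq:heute} are then obtained by running the identical argument with best approximation \emph{vectors} for simultaneous approximation in place of polynomials, using \eqref{eq:platz} to transfer, or equivalently by repeating the three displayed steps verbatim with $\lambda$'s; the only genuinely new bookkeeping is that the Minkowski-type upper bound in the dual setting reads $\widehat\lambda_{n,n+1}\le 1/n$ (cf. \eqref{eq:6}), which after substitution of the dual analogue of \eqref{eq:heute} produces the exponent $\tfrac{1}{n-1}$ and the factor $\frac{(n-1)\widehat\lambda_n}{1-\widehat\lambda_n}$ in \eqref{eq:feierabend}. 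The main obstacle throughout is making the $o(1)$ terms genuinely uniform so that passing to the limit along the infinitely many good indices $k$ is legitimate; this is routine but must be done with the liminf/limsup definitions handled correctly.
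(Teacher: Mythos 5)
Your argument for \eqref{eq:fastfertig} and \eqref{eq:heute} is essentially the paper's proof: chain the uniform bound $|P_k(\underline{\zeta})|\leq H(P_{k+1})^{-\widehat{w}_n+\epsilon}$ with the height-growth bound $\log H(P_{l+1})/\log H(P_l)\leq w_n/\widehat{w}_n+\epsilon$ applied repeatedly, use that the errors decrease along the sequence, and invoke Conjecture~\ref{bestapp} (resp.\ the unconditional three-term linear-independence fact of~\cite{ss} for $i=3$) to certify $w_{n,i}$. (You have the indices in the ``comparing $-\log L_{k+1}$ against...'' parenthetical swapped --- the bound $\leq w_n+\epsilon$ pertains to $-\log L_{k+1}/\log H_{k+1}$ and $\geq \widehat{w}_n-\epsilon$ to $-\log L_{k+1}/\log H_{k+2}$ --- but the intent is clear and this is only a transcription slip.)

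There is, however, a genuine gap in your derivation of \eqref{eq:99lb} and \eqref{eq:feierabend}. You propose to close the argument by bounding $w_{n,n+1}$ above via Minkowski's second theorem. But the duality $w_{n,n+1}=1/\widehat{\lambda}_n$ together with the Dirichlet/Minkowski lower bound $\widehat{\lambda}_n\geq 1/n$ gives only $w_{n,n+1}\leq n$ (your initial assertion ``$w_{n,n+1}\leq 1$'' is in fact false --- from \eqref{eq:11} one has $1\leq w_{n,n+1}\leq n$). Plugging $w_{n,n+1}\leq n$ into $w_{n,n+1}\geq \widehat{w}_n^n/w_n^{n-1}$ gives $w_n\geq\widehat{w}_n(\widehat{w}_n/n)^{1/(n-1)}$, which is strictly weaker than \eqref{eq:99lb} whenever $\widehat{w}_n>n$, since $(\widehat{w}_n-1)/(n-1)\geq\widehat{w}_n/n$ with equality only at $\widehat{w}_n=n$. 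You do write down the correct \emph{shape} of the needed upper bound, $w_{n,n+1}\leq (n-1)\widehat{w}_n/(\widehat{w}_n-1)$, but attribute it to ``the second theorem applied to the parametrized bodies,'' which does not deliver it. The actual ingredient is German's uniform transference inequality \eqref{eq:ogerman}, $\widehat{\lambda}_n\geq(\widehat{w}_n-1)/((n-1)\widehat{w}_n)$, which is a deeper result than Minkowski's second theorem and is what the paper invokes at this point. Similarly for \eqref{eq:feierabend}: the bound $\widehat{\lambda}_{n,n+1}\leq 1/n$ yields $\lambda_n\geq\widehat{\lambda}_n(n\widehat{\lambda}_n)^{1/(n-1)}$, which is weaker than the stated $\lambda_n\geq\widehat{\lambda}_n\left((n-1)\widehat{\lambda}_n/(1-\widehat{\lambda}_n)\right)^{1/(n-1)}$; again the reverse direction of German's inequality, $\lambda_{n,n+1}=1/\widehat{w}_n\leq(1-\widehat{\lambda}_n)/(n-1)$, is what is needed. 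Without identifying and invoking German's transference inequality, the final inequalities cannot be obtained.
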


For $n=2$, the estimate \eqref{eq:99lb} is unconditioned by Remark~\ref{markenrehe} and yields 
the inequality $w_{2}(\underline{\zeta})\geq \widehat{w}_{2}(\underline{\zeta})(\widehat{w}_{2}(\underline{\zeta})-1)$
known by Laurent~\cite{taurent}. There is equality in all inequalities of Theorem~\ref{zwischen}
for $(\zeta,\zeta^{2})$ when $\zeta$ is an extremal number defined by Roy,
see for example~\cite{royroy}. 
See also Moshchevitin~\cite[Section~3]{mosh3}
for  results related to \eqref{eq:99lb} and \eqref{eq:feierabend}.
For us the main purpose of Theorem~\ref{zwischen} is the connection
to uniform approximation, portrayed in the following theorem.  

\begin{theorem} \label{anderer}
Assume Conjecture~\ref{bestapp} is true. 
Then \eqref{eq:wegweg} holds.
\end{theorem}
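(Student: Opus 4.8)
The plan is to deduce Theorem~\ref{anderer} from Theorem~\ref{zwischen} by essentially the same argument that yields the conditional bound \eqref{eq:wegweg} from the assumption \eqref{eq:glueck}, so the first step is to verify that Conjecture~\ref{bestapp} supplies the inequality \eqref{eq:glueck} (or something strong enough to run that argument). Concretely, under Conjecture~\ref{bestapp} the estimate \eqref{eq:99lb} holds, i.e.
\[
w_{n}(\underline{\zeta})\geq \widehat{w}_{n}(\underline{\zeta})\left(\frac{\widehat{w}_{n}(\underline{\zeta})-1}{n-1}\right)^{\frac{1}{n-1}}.
\]
Setting $W=w_{n}(\zeta)$ and $\widehat{W}=\widehat{w}_{n}(\zeta)$ and recalling $\widehat{W}\geq n$ from \eqref{eq:dir2}, I would first check that this lower bound on $W$ in terms of $\widehat{W}$ is at least as strong as the bound one would get by inverting \eqref{eq:glueck}, namely $W\geq n^{-1/n}\widehat{W}^{(n+1)/n}$; if so, any upper bound for $\widehat{w}_{n}$ derivable from \eqref{eq:glueck} is also derivable here, and in particular \eqref{eq:wegweg} follows. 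Since the paper explicitly says the proof of Theorem~\ref{anderer} "is very similar to that of Theorem~\ref{reggraph}" and that the derivation of \eqref{eq:wegweg} from \eqref{eq:glueck} was already sketched, this comparison of the two lower bounds is the crux.

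The key steps, in order, are: (1) combine \eqref{eq:99lb} with $\widehat{W}\geq n$ to get a clean lower bound $W\geq f(n,\widehat{W})$; (2) suppose for contradiction that $\widehat{W}\geq 2n-1-\log 2+\epsilon$ for infinitely many $n$ with $\zeta$ fixed — or rather, fix a transcendental $\zeta$ and a large $n$ violating \eqref{eq:wegweg} — and feed this into $f(n,\widehat{W})$ to obtain a large lower bound on $W$; (3) confront this with the upper bound on $\widehat{W}$ in terms of $W$ coming from the opposite direction. Here I would use the trivial relation $\widehat{w}_{n}\le w_{n}$ is too weak, so instead I expect the argument to use the counterpart of how \eqref{eq:ohne}/\eqref{eq:wegweg} are proved: one balances the lower bound on $w_{n}$ forced by a large $\widehat{w}_{n}$ against an a-priori upper bound on $\widehat{w}_{n}$ that is a function of $w_{n}$ and degrades only slowly. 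The asymptotics $\widehat W \le 2n-1-\log 2 + \epsilon$ should drop out after taking logarithms and using $(1+x/m)^{m}\to e^{x}$, exactly as the constant $\log 2$ in \eqref{eq:wegweg} and the constant $\Delta=\log(2/\tau)+1$ in Theorem~\ref{reggraph} arise from such an exponential limit.

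In more detail for step (1)–(3): write $\widehat{W}=2n-c$ for the optimal $c$; \eqref{eq:99lb} gives $W\geq \widehat W\bigl((\widehat W-1)/(n-1)\bigr)^{1/(n-1)}$, and with $\widehat W\approx 2n-c$ the bracket is $\approx 2-(c+1)/(n-1)$, whose $(n-1)$-st root tends to $\,2\cdot 2^{-(c+1)/(2(n-1))}\to$ a constant, giving $W\gtrsim 2\widehat W$ up to lower-order terms — no, rather $W\gtrsim \widehat W\cdot 2$. Then one invokes, on the other side, a bound of the shape $\widehat W\le w_n$ replaced by the sharper known inequality relating $\widehat w_n$ and $w_n$ (the one used to prove \eqref{eq:wegweg} from \eqref{eq:glueck}); matching the two and optimizing pins down $c=1+\log 2$, i.e. \eqref{eq:wegweg}. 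I would present this as: "Arguing exactly as in the derivation of \eqref{eq:wegweg} from \eqref{eq:glueck} in Section~\ref{implic}, with \eqref{eq:glueck} replaced by the stronger consequence \eqref{eq:99lb} of Conjecture~\ref{bestapp}, we obtain \eqref{eq:wegweg}." The main obstacle I anticipate is making rigorous the claim that \eqref{eq:99lb} is genuinely at least as strong as \eqref{eq:glueck} uniformly for $\widehat W$ in the relevant range $[n, 2n]$ — this requires the elementary but slightly delicate inequality $\widehat W\bigl((\widehat W-1)/(n-1)\bigr)^{1/(n-1)}\ge n^{-1/n}\widehat W^{(n+1)/n}$ for $n\le \widehat W\le 2n$ and large $n$, which is where one must be careful about the exponents $1/(n-1)$ versus $1/n$ and the "$-1$" shifts; once that monotone comparison is in hand, the rest is the already-established computation, so I would state the comparison as a short lemma or inline estimate and then quote the Section~\ref{implic} argument verbatim.
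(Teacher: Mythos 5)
Your plan is essentially the paper's own proof: deduce \eqref{eq:99lb} from Conjecture~\ref{bestapp} via Theorem~\ref{zwischen}, observe that \eqref{eq:99lb} is stronger than \eqref{eq:glueck}, and then invoke the already-remarked implication \eqref{eq:glueck}~$\Rightarrow$~\eqref{eq:wegweg}. The comparison you flag as the crux, namely $\left(\frac{\widehat W-1}{n-1}\right)^{1/(n-1)}\geq\left(\frac{\widehat W}{n}\right)^{1/n}$ for $\widehat W\geq n$, is indeed what must be checked and is easy, since for $\widehat W>n$ both $\frac{1}{n-1}>\frac{1}{n}$ and $\frac{\widehat W-1}{n-1}>\frac{\widehat W}{n}>1$. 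The exploratory asymptotics in your last paragraph contain a slip (the $(n-1)$-st root of a quantity near $2$ tends to $1$, not $2$, so you certainly do not get $W\gtrsim 2\widehat W$), but since you ultimately discard that computation and fall back on citing the Section~\ref{implic} remark verbatim, the plan itself is sound and matches the paper.
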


\section{Proofs}

\subsection{Preliminary results}  \label{graph}

In this section
we establish several identities involving 
the exponents $\lambda_{n,j},\widehat{\lambda}_{n,j},w_{n,j},\widehat{w}_{n,j}$
in the regular graph, 
to prepare the proofs of the main results. 
They are essentially derived by algebraic rearrangements of
the identity
\begin{equation} \label{eq:total}
\frac{(\lambda_{n}+1)^{n+1}}{\lambda_{n}}=
\frac{(\widehat{\lambda}_{n,n+1}+1)^{n+1}}{\widehat{\lambda}_{n,n+1}},
\end{equation}
which was proved in~\cite[(95) in~Section~3]{j1}. 
In view of \eqref{eq:total} we define the auxiliary functions
\begin{equation} \label{eq:effen}
f_{n}(x):=\frac{(1+x)^{n+1}}{x}.
\end{equation}
It is easily verified that $f_{n}$ decays on $(0,1/n)$ and increases on $(1/n,\infty)$.
Hence we see that for given $\lambda_{n}\in{[1/n,\infty]}$, the constant
$\widehat{\lambda}_{n,n+1}$ is the unique solution of \eqref{eq:total} in the interval $[0,1/n]$.
Observe that by \eqref{eq:total} and the constant quotients \eqref{eq:quot}, the constants
$\lambda_{n}=\lambda$ and $\lambda_{n,j}(\lambda)$ satisfy the implicit equation
\begin{equation} \label{eq:impgl}
\frac{(1+\lambda)^{n+1}}{\lambda}=
\frac{\left(1+\lambda^{1-\frac{n+1}{j-1}}\lambda_{n,j}(\lambda)^{\frac{n+1}{j-1}}\right)^{n+1}}
{\lambda^{1-\frac{n+1}{j-1}}\lambda_{n,j}(\lambda)^{\frac{n+1}{j-1}}}.
\end{equation}
Moreover from \eqref{eq:total} and \eqref{eq:quot} we infer 
\begin{equation} \label{eq:sim}
\widehat{\lambda}_{n}=\lambda_{n}^{\frac{n}{n+1}}\widehat{\lambda}_{n,n+1}^{\frac{1}{n+1}}
=\lambda_{n}\left(\frac{\widehat{\lambda}_{n,n+1}}{\lambda_{n}}\right)^{\frac{1}{n+1}}.
\end{equation}
By combining \eqref{eq:total} with \eqref{eq:sim}, after 
some rearrangements we derive an implicit polynomial equation 
involving $\lambda_{n}$ and $\widehat{\lambda}_{n}$ of the form
\begin{equation} \label{eq:notnice}
(\widehat{\lambda}_{n}-1)\lambda_{n}^{n}+ 
\widehat{\lambda}_{n}\lambda_{n}^{n-1}-\widehat{\lambda}_{n}^{n+1}=0,
\end{equation}
where in the special case $\lambda_{n}=\infty$ we have to 
put $\widehat{\lambda}_{n}=1$.
Noticing that $\widehat{\lambda}_{n}=\lambda_{n}$ is a solution
of \eqref{eq:notnice} not of interest, we can decrease the degree by one
\[
\lambda_{n}^{n-1}(\widehat{\lambda}_{n}-1)+\widehat{\lambda}_{n}^{2}
\frac{\lambda_{n}^{n-1}-\widehat{\lambda}_{n}^{n-1}}{\lambda_{n}-\widehat{\lambda}_{n}}=\lambda_{n}^{n-1}(\widehat{\lambda}_{n}-1)+\widehat{\lambda}_{n}^{2}
(\lambda_{n}^{n-2}+\lambda_{n}^{n-3}\widehat{\lambda}_{n}+\cdots+
\widehat{\lambda}_{n}^{n-2})
=0.
\]
Now we want to establish the dual results.
One can either proceed similarly as in~\cite{j1} for \eqref{eq:total},
or immediately apply \eqref{eq:platz} 
to \eqref{eq:total}, to derive
\begin{equation} \label{eq:j1gl1}
\frac{(1+w_{n})^{n+1}}{w_{n}^{n}}=
\frac{(1+\widehat{w}_{n,n+1})^{n+1}}{\widehat{w}_{n,n+1}^{n}}
\end{equation}
for the regular graph. Observe that $\widehat{w}_{n,n+1}=1/\lambda_{n}\in{[0,n]}$ by
\eqref{eq:platz} and \eqref{eq:dir1}, whereas $w_{n}\in{[n,\infty]}$ by \eqref{eq:dir2}.
In particular it is not hard to see that for given $w_{n}\in{[n,\infty]}$ the approximation constant
$\widehat{w}_{n,n+1}$ is the unique real solution of \eqref{eq:j1gl1} in the interval $[0,n]$.
Moreover again for the regular graph all quotients
$w_{n,j}/w_{n,j+1}=w_{n,j}/\widehat{w}_{n,j}$
coincide for $1\leq j\leq n+1$, where we put $w_{n,n+2}:=\widehat{w}_{n,n+1}$. 
This yields
\begin{equation} \label{eq:j1gl2}
\widehat{w}_{n}=w_{n}^{\frac{n}{n+1}}\widehat{w}_{n,n+1}^{\frac{1}{n+1}}
=w_{n}\left(\frac{\widehat{w}_{n,n+1}}{w_{n}}\right)^{\frac{1}{n+1}}.
\end{equation}
From \eqref{eq:j1gl2} and the most right inequality of \eqref{eq:16} we obtain
\eqref{eq:glueck},
where equality holds only in case of $\widehat{w}_{n,n+1}=n$ or equivalently $w_{n}=n$.
Expressing $\widehat{w}_{n}$ in terms of $w_{n},\widehat{w}_{n}$ by rearranging
\eqref{eq:j1gl2} and inserting in \eqref{eq:j1gl1}, some further 
rearrangements lead to the nice implicit equation
\begin{equation} \label{eq:pasta}
w_{n}-\widehat{w}_{n}+1=\left(\frac{w_{n}}{\widehat{w}_{n}}\right)^{n}.
\end{equation}
We summarize the above observations in a proposition.

\begin{proposition}  \label{psiprop}
	The function $\phi_{n}$ coincides with the unique solution of $\widehat{w}_{n}$ in \eqref{eq:pasta} 
	in terms of $w_{n}$ in the interval $[n,w_{n})$, unless $w_{n}=\phi_{n}(w_{n})=\widehat{w}_{n}=n$. 
	The function $\vartheta_{n}$ coincides with the 
	unique solution of $\widehat{\lambda}_{n}$ in \eqref{eq:notnice} in terms of $\lambda_{n}$ in 
	the interval $[1/n,\lambda_{n})$, unless $\lambda_{n}=\vartheta_{n}(\lambda_{n})=\widehat{\lambda}_{n}=1/n$.
\end{proposition}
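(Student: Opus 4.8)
The plan is to verify the two assertions of Proposition~\ref{psiprop} by tracing back through the derivations already laid out in Section~\ref{graph}. The content of the proposition is really just a bookkeeping summary: equations \eqref{eq:pasta} and \eqref{eq:notnice} were obtained by algebraic rearrangement from \eqref{eq:j1gl1}, \eqref{eq:j1gl2} (respectively \eqref{eq:total}, \eqref{eq:sim}), so the only genuine work is to argue (i) that the relevant equation has a \emph{unique} solution in the stated interval, and (ii) that this solution is indeed $\phi_n$ (respectively $\vartheta_n$) as defined in Definition~\ref{lator}, namely the value of $\widehat{w}_n$ (resp.\ $\widehat{\lambda}_n$) for the regular graph with prescribed $w_n$ (resp.\ $\lambda_n$).

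For the $\vartheta_n$ part I would proceed as follows. Given $\lambda_n\in[1/n,\infty]$, the discussion after \eqref{eq:effen} already establishes via the monotonicity of $f_n$ on $(0,1/n)$ and $(1/n,\infty)$ that $\widehat{\lambda}_{n,n+1}$ is uniquely determined in $[0,1/n]$ by \eqref{eq:total}; then \eqref{eq:sim} determines $\widehat{\lambda}_n$ uniquely, and this is by definition $\vartheta_n(\lambda_n)$. It remains to see that this $\widehat{\lambda}_n$ is the solution of \eqref{eq:notnice} lying in $[1/n,\lambda_n)$. That it solves \eqref{eq:notnice} is immediate since \eqref{eq:notnice} was derived from \eqref{eq:total} and \eqref{eq:sim}. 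For the range: $\widehat{\lambda}_n=\lambda_{n,2}\le\lambda_{n,1}=\lambda_n$ with equality forced only when $\lambda_n=1/n$ (here I would cite \eqref{eq:1}, \eqref{eq:6}, or Proposition~\ref{propodil}/Remark~\ref{zwergberg}), and $\widehat{\lambda}_n\ge 1/n$ by \eqref{eq:dir1}. For uniqueness in $[1/n,\lambda_n)$ I would examine the depressed cubic-type factor displayed right after \eqref{eq:notnice}, i.e. $g(\widehat{\lambda}_n):=\lambda_n^{n-1}(\widehat{\lambda}_n-1)+\widehat{\lambda}_n^2(\lambda_n^{n-2}+\lambda_n^{n-3}\widehat{\lambda}_n+\cdots+\widehat{\lambda}_n^{n-2})$, and check that $g$ is strictly increasing in $\widehat{\lambda}_n$ on the relevant range (each summand is increasing for positive $\widehat{\lambda}_n$), so it has at most one zero there; combined with existence this gives the claim. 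The degenerate cases $\lambda_n=1/n$ (everything equals $1/n$) and $\lambda_n=\infty$ ($\widehat{\lambda}_n=1$) are handled separately as flagged in the text.

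The $\phi_n$ part is entirely parallel and I would just mirror the argument, using \eqref{eq:j1gl1} in place of \eqref{eq:total}, \eqref{eq:j1gl2} in place of \eqref{eq:sim}, and \eqref{eq:pasta} in place of \eqref{eq:notnice}. Here one uses: for given $w_n\in[n,\infty]$ the constant $\widehat{w}_{n,n+1}\in[0,n]$ is the unique solution of \eqref{eq:j1gl1} (again by a monotonicity analysis of $x\mapsto (1+x)^{n+1}/x^n$, which decreases on $(0,n)$ and increases on $(n,\infty)$); then \eqref{eq:j1gl2} pins down $\widehat{w}_n=\phi_n(w_n)$; then \eqref{eq:pasta} holds for it by construction, and $\widehat{w}_n\in[n,w_n)$ by \eqref{eq:dir2} together with $\widehat{w}_n\le w_n$ with equality only at $w_n=n$. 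For uniqueness in $[n,w_n)$ I would write \eqref{eq:pasta} as $h(t):=t^n-(w_n-t+1)t^n/t^n$... more cleanly, set $F(\widehat{w}_n):=(w_n-\widehat{w}_n+1)\widehat{w}_n^{\,n}-w_n^{\,n}$ and check $F$ is strictly monotone on $(n,w_n)$, or equivalently that $t\mapsto (w_n/t)^n - (w_n - t + 1)$ is strictly monotone there; this kills any second solution. Alternatively, I could deduce the $\phi_n$ statement from the $\vartheta_n$ statement directly via the duality \eqref{eq:platz}, substituting $\lambda_n=1/\widehat{w}_{n,n+1}$ and $\widehat{\lambda}_{n,n+1}=1/w_n$, which is probably the shortest route and avoids redoing the monotonicity check.

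The main obstacle, such as it is, is the uniqueness claim: one must be careful that the algebraic manipulations leading to \eqref{eq:notnice} and \eqref{eq:pasta} did not introduce spurious roots outside the stated intervals, and that within the stated interval the reduced equation is genuinely monotone so that the regular-graph value is the only root. I expect this to come down to a short sign/monotonicity computation on the polynomial $g$ above (and its dual), which I would present compactly rather than expanding in full. The degenerate endpoints $\lambda_n\in\{1/n,\infty\}$ and $w_n\in\{n,\infty\}$ need a one-line separate mention, exactly as the proposition's ``unless'' clauses anticipate.
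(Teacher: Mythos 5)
Your proposal is correct and follows essentially the same route as the paper. The paper's own proof is extremely terse — it simply states that uniqueness ``can be easily proved,'' that \eqref{eq:pasta} and \eqref{eq:notnice} hold by the preceding derivations, and that the interval claims follow from \eqref{eq:dir1} and \eqref{eq:dir2} — and you have just filled in the omitted monotonicity computation on the reduced factor $g$ (and its dual) together with the strict-inequality observations needed for the ``unless'' clauses. The alternative you mention of deducing the $\phi_n$ statement from the $\vartheta_n$ statement via \eqref{eq:platz} is a reasonable shortcut, though the paper's one-line proof doesn't indicate which route it has in mind.
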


\begin{proof}
	The asserted uniqueness can be easily proved. It has been established 
	that \eqref{eq:pasta} and \eqref{eq:notnice} are satisfied and the claim on the intervals follows
	from \eqref{eq:dir1} and \eqref{eq:dir2}.
\end{proof}

We remark that similarly to \eqref{eq:impgl},
one can obtain an implicit equation involving $w_{n}$ and 
$w_{n,j}=\widehat{w}_{n,j-1}$ for $2\leq j\leq n+2$, and
dual interpretations of 
Theorem~\ref{bergzwerg} and Remark~\ref{zwergberg} provide some information on the
monotonicity of the functions $w_{n,j}$ in dependence of $w_{n}$. We do not carry this out.

\subsection{Proofs of Section~\ref{newregular}} \label{proof1}

For the first proof recall the functions $f_{n}$ from \eqref{eq:effen} and their properties.

\begin{proof}[Proof of Proposition~\ref{propodil}]
By the assumptions the regular graphs with parameter $\lambda$ in dimension $n_{1},n_{2}$ 
are well-defined (and exist due to Roy~\cite{roy5}).
Since in the regular graph the quotients \eqref{eq:quot} coincide, it suffices to 
prove that $\lambda_{n,2}(\lambda)=\widehat{\lambda}_{n}(\lambda)$
decreases for fixed $\lambda$ as $n$ increases. 

Recall the functions $f_{n}$ defined in Section~\ref{newregular}.
We have $f_{n+1}(\lambda)/f_{n}(\lambda)=1+\lambda$ and hence in view of \eqref{eq:total} also
\begin{equation} \label{eq:wild}
\frac{f_{n+1}(\widehat{\lambda}_{n+1,n+2}(\lambda))}{f_{n}(\widehat{\lambda}_{n,n+1}(\lambda))}=1+\lambda.
\end{equation}
On the other hand we claim that 
\begin{equation} \label{eq:wechsel}
\widehat{\lambda}_{n+1,n+2}(\lambda)< \widehat{\lambda}_{n,n+1}(\lambda).
\end{equation}
In case of $\widehat{\lambda}_{n,n+1}(\lambda)>1/(n+1)$ this is trivial since 
$\widehat{\lambda}_{n+1,n+2}(\lambda)\leq 1/(n+1)$. If otherwise $\widehat{\lambda}_{n,n+1}(\lambda)\leq 1/(n+1)$,
then \eqref{eq:wechsel} follows from the decay of the function $f_{n+1}$ on $(0,1/(n+1))$ and
$f_{n+1}(\widehat{\lambda}_{n,n+1}(\lambda))/f_{n}(\widehat{\lambda}_{n,n+1}(\lambda))
=1+\widehat{\lambda}_{n,n+1}(\lambda)\leq 1+\lambda$, in combination with \eqref{eq:wild}.
From \eqref{eq:wechsel} we deduce
\[
(1+\widehat{\lambda}_{n+1,n+2}(\lambda))^{n+2}\leq (1+\widehat{\lambda}_{n,n+1}(\lambda))^{n+2}
= (1+\widehat{\lambda}_{n,n+1}(\lambda))^{n+1}\frac{f_{n+1}(\widehat{\lambda}_{n,n+1}(\lambda))}
{f_{n}(\widehat{\lambda}_{n,n+1}(\lambda))}.
\]
Observe the left and middle quantities are the nominators 
of $f_{n+1}(\widehat{\lambda}_{n,n+1}(\lambda))$ and 
$f_{n+1}(\widehat{\lambda}_{n+1,n+2}(\lambda))$, respectively.
Together with \eqref{eq:wild}, for the according denominators we infer
\begin{equation} \label{eq:esel}
\frac{\widehat{\lambda}_{n,n+1}(\lambda)}{\widehat{\lambda}_{n+1,n+2}(\lambda)}
>\frac{1+\lambda}{\widehat{\lambda}_{n,n+1}(\lambda)}.
\end{equation}
The identities \eqref{eq:sim} for $n,n+1$ yield
\begin{align*}
\widehat{\lambda}_{n}(\lambda)&= \lambda^{n/(n+1)}\widehat{\lambda}_{n,n+1}(\lambda)^{1/(n+1)},  \\
\widehat{\lambda}_{n+1}(\lambda)&= \lambda^{(n+1)/(n+2)}\widehat{\lambda}_{n+1,n+2}(\lambda)^{1/(n+2)}.
\end{align*} 
Taking quotients, with $(n+1)/(n+2)-n/(n+1)=1/(n+1)-1/(n+2)=(n+1)^{-1}(n+2)^{-1}$ we get
\[
\frac{\widehat{\lambda}_{n}(\lambda)}{\widehat{\lambda}_{n+1}(\lambda)}
\geq \lambda^{-\frac{1}{(n+1)(n+2)}}\widehat{\lambda}_{n,n+1}(\lambda)^{\frac{1}{(n+1)(n+2)}}
\left(\frac{\widehat{\lambda}_{n,n+1}(\lambda)}{\widehat{\lambda}_{n+1,n+2}(\lambda)}\right)^{\frac{1}{n+2}}.
\]
Inserting the bound \eqref{eq:esel}, for the last expression we obtain
\begin{equation} \label{eq:rhswichtig}
\frac{\widehat{\lambda}_{n}(\lambda)}{\widehat{\lambda}_{n+1}(\lambda)}
\geq \lambda^{-\frac{1}{(n+1)(n+2)}}\widehat{\lambda}_{n,n+1}(\lambda)^{\frac{1}{(n+1)(n+2)}}
\left(\frac{1+\lambda}{\widehat{\lambda}_{n,n+1}(\lambda)}\right)^{\frac{1}{n+2}}.
\end{equation}
One readily checks that the right hand side in \eqref{eq:rhswichtig} equals $1$, since this 
is equivalent to $f_{k}(\lambda)=f_{k}(\widehat{\lambda}_{n,n+1}(\lambda))$, which is \eqref{eq:total}.
This finishes the proof.
\end{proof}

\begin{proof}[Proof of Corollary~\ref{korola}]
It was shown in~\cite[Proposition~5]{ss} that
we have $\widehat{\lambda}_{n}(\lambda)/\lambda> (\lambda+1)^{-1}$
in the regular graph with parameter $\lambda_{n}=\lambda$. On the other hand the quotients 
$\lambda_{n,j}/\lambda_{n,j+1}$ are identical for all $1\leq j\leq n+1$ by \eqref{eq:quot}. Hence
\[
\widehat{\lambda}_{n,j-1}(\lambda)= \lambda_{n,j}(\lambda)=
\lambda\left(\frac{\widehat{\lambda}_{n}(\lambda)}{\lambda}\right)^{j-1}\geq \frac{\lambda}{(1+\lambda)^{j-1}}.
\]
In Proposition~\ref{propodil} we proved that the values $\widehat{\lambda}_{n,j-1}(\lambda)= \lambda_{n,j}(\lambda)$
decay as $n$ increases, hence the limit of $\lambda_{n,j}(\lambda)$ as $n\to\infty$
exists and equals at least the given quantity. We have to show
equality. Again as all the quotients 
$\lambda_{n,j-1}/\lambda_{n,j}$ are identical, 
it obviously suffices to show this for $j=2$.
For $\lambda,\widehat{\lambda}_{n}(\lambda)$ as above define $\alpha(n)$ implicitly by 
\begin{equation} \label{eq:alpha}
\widehat{\lambda}_{n}(\lambda)=\alpha(n)\frac{\lambda}{1+\lambda}.
\end{equation}
Then the sequence $\alpha(n)\geq 1$ decreases to some limit at least $1$
and we have to show $\lim_{n\to\infty} \alpha(n)=1$. Observe a rearrangement of \eqref{eq:sim}
and \eqref{eq:alpha} yield
\[
\widehat{\lambda}_{n,n+1}(\lambda)=\lambda\left(\frac{\widehat{\lambda}_{n}(\lambda)}{\lambda}\right)^{n+1}
=\lambda\left(\frac{\alpha(n)}{1+\lambda}\right)^{n+1}.
\]
Inserting the right hand side in the identity \eqref{eq:total}, 
elementary rearrangements lead to
\begin{equation} \label{eq:toni}
\alpha(n)= 1+\lambda\left(\frac{\alpha(n)}{1+\lambda}\right)^{n+1}.
\end{equation}
If we had $\lim_{n\to\infty} \alpha(n)\geq \lambda+1$ then $\widehat{\lambda}_{n}(\lambda)\geq \lambda
=\lambda_{n}(\lambda)$,
contradiction. Thus $\lim_{n\to\infty} \alpha(n)<\lambda+1$. Hence the right hand side of \eqref{eq:toni} converges to $1$ as $n\to\infty$, 
and thus the left hand side does as well. This completes the proof. 
\end{proof}

For the proof of Theorem~\ref{bergzwerg} we consider $\lambda$ in small 
intervals of the form $(1/n,1/n+\epsilon)$. 

\begin{proof}[Proof of Theorem~\ref{bergzwerg}]
Clearly $\lambda_{n,j}=1/n$ for all $1\leq j\leq n+2$ if $\lambda=1/n$.
Further observe that by the most left inequalities of
\eqref{eq:11} and \eqref{eq:16}, and \eqref{eq:platz},
we have $\lambda_{n,n+1}(\lambda)=\widehat{w}_{n}(\lambda)^{-1}\leq 1/n$
and $\lambda_{n,n+2}=w_{n}(\lambda)^{-1}\leq 1/n$ Equality holds
only if the quantities equal $1/n$ anyway, where we put $w_{n}(\lambda)$ for
the value $w_{n}$ induced for the regular graph with parameter $\lambda_{n,1}=\lambda$. 
Thus we can restrict to $2\leq j\leq n$. 

So let $n\geq 1$ and $2\leq j\leq n$ be arbitrary but fixed. Write $\lambda_{n}=\lambda=\alpha/n$ 
for $\alpha>1$, where we consider only $\alpha$ slightly larger than $1$.
Then \eqref{eq:impgl} becomes
\begin{equation} \label{eq:daoben}
\frac{\left(1+\frac{\alpha}{n}\right)^{n+1}}{\frac{\alpha}{n}}
= \frac{\left(1+\left(\frac{\alpha}{n}\right)^{1-\frac{n+1}{j-1}}\lambda_{n,j}(\frac{\alpha}{n})^{\frac{n+1}{j-1}} \right)^{n+1}}
{\left(\frac{\alpha}{n}\right)^{1-\frac{n+1}{j-1}}\lambda_{n,j}(\frac{\alpha}{n})^{\frac{n+1}{j-1}}}.
\end{equation}
We ask for which values of $j$ it is possible to have $\lambda_{n,j}(\frac{\alpha}{n})=1/n$ for some $\alpha>1$.
So we insert $\lambda_{n,j}(\frac{\alpha}{n})=1/n$ in \eqref{eq:daoben}, 
and rearrange \ref{eq:daoben} in the following way. We multiply
with $\alpha/n$, then divide by the nominator
of the right hand side and take the $(n+1)$-st root. After further elementary rearrangements
and simplification, we end up with the equivalent identity
\begin{equation} \label{eq:furien}
n=\frac{\alpha-\alpha^{\frac{j-n-1}{j-1}}}{\alpha^{\frac{1}{j-1}}-1}.
\end{equation}
Let $\theta:=\alpha^{\frac{1}{j-1}}$. Clearly $\theta>1$ is equivalent to $\alpha>1$.
Furthermore \eqref{eq:furien} is equivalent to
\begin{equation} \label{eq:honign}
n=\theta^{j-1-n}\frac{\theta^{n}-1}{\theta-1}=\theta^{j-2}+\theta^{j-3}+\cdots+\theta^{j-1-n}=:\chi_{n,j}(\theta).
\end{equation}
By construction $\chi_{n,j}(1)=n$. First consider $n\leq 2j-3$ or equivalently $j\geq \frac{n+3}{2}$. 
We calculate
\[
\chi_{n,j}^{\prime}(t)=(j-2)t^{j-3}+(j-3)t^{j-4}+\cdots+(j-n-1)t^{j-n-2},
\]
and
\[
\chi_{n,j}^{\prime\prime}(t)=(j-2)(j-3)t^{j-4}+(j-3)(j-4)t^{j-5}+\cdots+(j-1-n)(j-2-n)t^{j-n-3}.
\]
It is easy to verify $\chi_{n,j}^{\prime\prime}(t)>0$ for all $t>0$. Indeed any expression in the sum is non-negative, and for $j\geq 4$ the first and for $j=3$ the last is strictly positive. Hence it suffices to show
$\chi_{n,j}^{\prime}(1)>0$ to see that $\chi_{n,j}(t)>n$ for all $t>1$. 
Indeed, for $j\geq \frac{n+3}{2}$ we verify
\begin{equation} \label{eq:kardinal}
\chi_{n,j}^{\prime}(1)=(j-2)+(j-3)+\cdots+(j-n-1)=nj-\sum_{i=2}^{n+1} i= nj-\frac{n^{2}+3n}{2}\geq 0.
\end{equation}
We conclude $\lambda_{n,j}(\lambda)\neq 1/n$ for all $\lambda>1/n$. By the continuity of $\lambda_{n,j}$, we must have either $\lambda_{n,j}(\lambda)<1/n$ for 
all $\lambda>1/n$ or $\lambda_{n,j}(\lambda)>1/n$ for all $\lambda>1/n$.
However, since $j\geq 3$, we can exclude the latter since in Proposition~\ref{lagegrund} we showed
\begin{equation} \label{eq:limmes}
\lim_{\lambda\to\infty} \lambda_{n,j}(\lambda)=0, \qquad j\geq 3.
\end{equation}
We have proved all claims for $j\geq \frac{n+3}{2}$.
Now let $j<\frac{n+3}{2}$, which is equivalent to $n\geq 2j-2$. Then 
\[
\chi_{n,j}^{\prime}(1)= nj-\frac{n^{2}+3n}{2}<0.
\]
Hence, since $\chi_{n,j}^{\prime\prime}(t)>0$ for all $t>0$, there exists precisely one value 
$\mu_{0}>1$ for which $\chi_{n,j}(\mu_{0})=n$, or equivalently precisely one $\tilde{\lambda}>1/n$ with 
$\lambda_{n,j}(\tilde{\lambda})=1/n$. Again by \eqref{eq:limmes} and continuity,
we must have $\lambda_{n,j}(\lambda)<1/n$ for $\lambda>\tilde{\lambda}$. Moreover, again by
intermediate value theorem either $\lambda_{n,j}(\lambda)>1/n$ for all $\lambda\in{(1/n,\tilde{\lambda})}$
or $\lambda_{n,j}(\lambda)<1/n$ for all $\lambda\in{(1/n,\tilde{\lambda})}$. Suppose conversely
to the claim of the theorem the latter is true.
Recall the implicit equation \eqref{eq:impgl} involving $\lambda_{n}=\lambda$ and $\lambda_{n,j}(\lambda)$. 
Denote
\[
F(x)=\frac{(1+x)^{n+1}}{x}, \qquad G(x,y)=\frac{(1+xy)^{n+1}}{xy},
\]
such that \eqref{eq:impgl} becomes 
$F(\lambda)=G(\lambda^{1-\frac{n+1}{j-1}},\lambda_{n,j}(\lambda)^{1-\frac{n+1}{j-1}})$. 
Proceeding as above, we will show next that for $\lambda$ close to $1/n$ we have
\begin{equation} \label{eq:ratt}
F(\lambda)=G(\lambda^{1-\frac{n+1}{j-1}},\lambda_{n,j}(\lambda)^{\frac{n+1}{j-1}})<
G(\lambda^{1-\frac{n+1}{j-1}},(1/n)^{\frac{n+1}{j-1}}).
\end{equation}
Observe that with $\lambda=\alpha/n$, inequality \eqref{eq:ratt} is equivalent to 
\begin{equation} \label{eq:furiene}
n>\frac{\alpha-\alpha^{\frac{j-n-1}{j-1}}}{\alpha^{\frac{1}{j-1}}-1}.
\end{equation}
Proceeding as above subsequent to \eqref{eq:furien},
we see that for \eqref{eq:furiene} the condition
$\chi_{n,j}^{\prime}(1)>0$ is sufficient. We readily verify that
for $j<\frac{n+3}{2}$ and $\alpha$ sufficiently close to $1$, with a very similar
calculation as in \eqref{eq:kardinal}. Thus we have showed \eqref{eq:ratt}. 
Hence if $\lambda_{n,j}(\lambda)<1/n$ for such $\lambda$,
then by intermediate value theorem of differentiation we must have
\begin{equation} \label{eq:widspruch}
\frac{dG}{dy}(\lambda^{1-\frac{n+1}{j-1}},\eta)>0
\end{equation}
for some pair $(\lambda,\eta)$ with $\lambda\geq 1/n$ and 
$\eta\in(\lambda_{n,j}(\lambda)^{\frac{n+1}{j-1}},(1/n)^{\frac{n+1}{j-1}})$. 
We disprove this. We calculate
\[
\frac{dG(x,y)}{dy}= (nxy-1)(1+xy)^{n}\frac{1}{xy^{2}}.
\]
Hence the sign of the partial derivative of $G$ in \eqref{eq:widspruch}
equals that of $nxy-1$. Our hypothesis yields
\[
n\lambda \eta
\leq n\left(\frac{\alpha}{n}\right)^{1-\frac{n+1}{j-1}}\left(\frac{1}{n}\right)^{\frac{n+1}{j-1}}
= \alpha^{1-\frac{n+1}{j-1}}<1
\]
since $\alpha>1$ and the exponent is negative. Hence $dG(\lambda^{1-\frac{n+1}{j-1}},\eta)/dy<0$
for all $\eta\in(\lambda_{n,j}(\lambda)^{\frac{n+1}{j-1}},(1/n)^{\frac{n+1}{j-1}})$.
This contradicts \eqref{eq:widspruch}.
Hence the hypothesis was wrong and we must have $\lambda_{n,j}(\lambda)>1/n$ for all $\lambda\in{(1/n,\tilde{\lambda})}$.

Finally the fact that $\tilde{\lambda}<n$ follows 
from combination of $\lambda_{n,j}(\tilde{\lambda})=1/n$ and 
$\lambda_{n,j}(\tilde{\lambda})<\tilde{\lambda}^{2-j}\leq \tilde{\lambda}^{-1}<n$ 
for $\tilde{\lambda}>1/n$ and $j\geq 3$, 
see the proof of Proposition~\ref{lagegrund}.
\end{proof}

\subsection{Proofs of Section~\ref{implic}}  \label{4proofs}

We turn towards the proof of Theorem~\ref{reggraph}. We briefly outline a sketch of the proof.
The essential tool for the proof of Theorem~\ref{reggraph} are
special cases of \cite[Theorem 2.2,~2.3 and 2.4]{buschl} 
comprised in Theorem~\ref{zamgfasst} below.

\begin{theorem}[Bugeaud, Schleischitz] \label{zamgfasst}
	Let $n\geq 2$ and $\zeta$ be real transcendental. We have
	\begin{equation} \label{eq:rumnudel}
	\widehat{w}_{n}^{\ast}(\zeta)\leq \frac{nw_{n}(\zeta)}{w_{n}(\zeta)-n+1}.
	\end{equation}
	If $w_{n}(\zeta)>w_{n-1}(\zeta)$ then we have the stronger estimate
	\begin{equation} \label{eq:nudelrum}
	\widehat{w}_{n}(\zeta)\leq \frac{nw_{n}(\zeta)}{w_{n}(\zeta)-n+1}.
	\end{equation}
	If otherwise for $m<n$ we have $w_{m}(\zeta)=w_{n}(\zeta)$, then 
	\begin{equation} \label{eq:mundln}
	\widehat{w}_{n}(\zeta)\leq m+n-1\leq 2n-2.
	\end{equation}
\end{theorem}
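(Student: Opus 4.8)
The statement is quoted (as special cases) from \cite[Theorem~2.2, 2.3 and 2.4]{buschl}; I sketch how one would prove it in the style of Davenport and Schmidt~\cite{davsh}. Fix a real transcendental $\zeta$ and $n\ge 2$, let $P_{1},P_{2},\ldots$ be the best approximation polynomials for $(n,\zeta)$ as in Section~\ref{andere}, and write $X_{k}=H(P_{k})$ (non-decreasing) and $\eta_{k}=|P_{k}(\zeta)|$ (strictly decreasing). Directly from the definitions,
\begin{equation*}
w_{n}(\zeta)=\limsup_{k\to\infty}\frac{-\log\eta_{k}}{\log X_{k}},\qquad
\widehat{w}_{n}(\zeta)=\liminf_{k\to\infty}\frac{-\log\eta_{k}}{\log X_{k+1}},
\end{equation*}
so that $\eta_{k}\le X_{k+1}^{-\widehat{w}_{n}(\zeta)+o(1)}$ for every $k$, while $\eta_{k}=X_{k}^{-w_{n}(\zeta)+o(1)}$ along a subsequence of ``record'' indices. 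Combining the two facts at a record index $k$ gives the gap bound $X_{k+1}\le X_{k}^{w_{n}(\zeta)/\widehat{w}_{n}(\zeta)+o(1)}$, which is the source of the dependence on $w_{n}$ in \eqref{eq:rumnudel} and \eqref{eq:nudelrum}. The passage to algebraic numbers is standard (the root of $P_{k}$ closest to $\zeta$ satisfies $|\zeta-\alpha|\asymp\eta_{k}/|P_{k}'(\zeta)|$, with $|P_{k}'(\zeta)|$ controlled below by a Gelfond-type root-separation estimate), which is why \eqref{eq:rumnudel} is the $\ast$-version.

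The engine is a Davenport--Schmidt-type resultant estimate: if $P,Q\in\mathbb{Z}[X]$ of degree at most $n$ are coprime, then $|\mathrm{Res}(P,Q)|\ge 1$ together with a Mahler-type upper bound for the resultant yields
\begin{equation*}
H(P)^{n-1}H(Q)^{n-1}\bigl(H(Q)\,|P(\zeta)|+H(P)\,|Q(\zeta)|\bigr)\gg_{n}1
\end{equation*}
(taking $H(P)=H(Q)$ and both values equal already recovers $\widehat{w}_{n}\le 2n-1$ of \cite{davsh}). Apply this with $P=P_{k}$, $Q=P_{k+1}$ at a record index $k$: since $\eta_{k+1}<\eta_{k}$ and $X_{k}\le X_{k+1}$, the first summand dominates, and inserting $\eta_{k}=X_{k}^{-w_{n}+o(1)}$ leaves $X_{k}^{n-1-w_{n}}X_{k+1}^{n}\gg_{n}1$, i.e.\ $X_{k+1}\gg X_{k}^{(w_{n}-n+1)/n-o(1)}$. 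Together with the gap bound $X_{k+1}\le X_{k}^{w_{n}/\widehat{w}_{n}+o(1)}$ and $k\to\infty$ along records, this forces $(w_{n}-n+1)/n\le w_{n}/\widehat{w}_{n}$, that is $\widehat{w}_{n}(\zeta)\le nw_{n}(\zeta)/(w_{n}(\zeta)-n+1)$, equivalently the transference-type inequality $n/\widehat{w}_{n}+(n-1)/w_{n}\ge 1$. Running the same computation with the irreducible minimal polynomials of the best approximating algebraic numbers of degree $\le n$ — for which coprimality of distinct ones is automatic — gives the unconditional estimate \eqref{eq:rumnudel} for $\widehat{w}_{n}^{\ast}$.

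For \eqref{eq:nudelrum} one must legitimise taking $P_{k}$ and $P_{k+1}$ coprime at record indices, and here Davenport and Schmidt's analysis of the degenerate case enters: if two consecutive best approximation polynomials share a non-trivial common factor $R$, then $R$ has controlled height and $\zeta$ lies very close to a root of $R$, so the good degree-$n$ approximations factor through degree-$m$ ones with $m=\deg R<n$; this persists only if $w_{m}(\zeta)=w_{n}(\zeta)$, which is excluded by the hypothesis $w_{n}(\zeta)>w_{n-1}(\zeta)$. When instead $w_{m}(\zeta)=w_{n}(\zeta)$ for some $m<n$, one pairs the degree-$m$ factor with a complementary factor of degree at most $n-m$ in a further resultant estimate and gets $\widehat{w}_{n}(\zeta)\le m+n-1\le 2n-2$, which is \eqref{eq:mundln}; as the failure of the hypothesis of \eqref{eq:nudelrum} is exactly the case $m=n-1$, the three bounds together cover every $\zeta$. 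The main obstacle is technical rather than conceptual: getting the exponents in the resultant/Mahler estimate exactly right (my sketch glosses over several constants) and carrying out the common-factor bookkeeping of \cite{davsh} while tracking all heights — for these steps I would invoke \cite{buschl} rather than reproduce them.
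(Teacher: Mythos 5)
The paper contains no proof of Theorem~\ref{zamgfasst}: it is imported verbatim as special cases of \cite[Theorems~2.2--2.4]{buschl}, exactly as you observe, so there is no internal argument to compare yours against. Your Davenport--Schmidt-style sketch --- the resultant lower bound $H(P)^{n-1}H(Q)^{n-1}\bigl(H(Q)|P(\zeta)|+H(P)|Q(\zeta)|\bigr)\gg_n 1$ for coprime $P,Q$, combined with the gap estimate $X_{k+1}\le X_k^{w_n/\widehat{w}_n+o(1)}$ at record indices, with coprimality supplied by irreducibility of minimal polynomials in the $\ast$-case and by the hypothesis $w_n>w_{n-1}$ otherwise, and the common-factor analysis yielding $m+n-1$ when $w_m=w_n$ --- is faithful to how the cited source actually argues, and your deferral to \cite{buschl} for the precise height bookkeeping matches the level of detail the paper itself provides.
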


Throughout assume Conjecture~\ref{schmsum} holds. Before we proof Theorem~\ref{reggraph},
we want to provide some better numeric results for not too large $n$.
We point out that the functions $\phi_{n}$ are 
increasing. This fact is rather obvious from the definition of the regular graph, 
we omit a rigorous proof.
Let $\widetilde{w}_{n}(\zeta)$ be the solution of the implicit equation
\begin{equation} \label{eq:vier}
\phi_{n}(\widetilde{w}_{n}(\zeta))=
\frac{n\widetilde{w}_{n}(\zeta)}{\widetilde{w}_{n}(\zeta)-n+1}.
\end{equation}
Since $\phi_{n}$ increases whereas the right hand side of \eqref{eq:vier} decreases, 
it follows from \eqref{eq:fischers} and Theorem~\ref{zamgfasst} that
the corresponding value $\phi_{n}(\widetilde{w}_{n}(\zeta))$ is an upper bound for $\widehat{w}_{n}^{\ast}(\zeta)$, and 
in case of $\phi_{n}(\widetilde{w}_{n}(\zeta))\geq 2n-2$ for $\widehat{w}_{n}(\zeta)$ as well. 
For $n\in\{2,3\}$, this procedure leads precisely to the bounds 
$(3+\sqrt{5})/2$ and $3+\sqrt{2}$ in \eqref{eq:tomcat3} and \eqref{eq:tomcat4}, respectively. For $n\geq 4$ not too large, Mathematica can determine
a numerical solution of \eqref{eq:vier}. We provide the implied bounds
\begin{equation} \label{eq:kanada}
\widehat{w}_{4}(\zeta)< 6.2875, \quad 
\widehat{w}_{20}^{\ast}(\zeta)< 37.8787,   \quad
\widehat{w}_{50}^{\ast}(\zeta)< 97.7996.
\end{equation}
Unless $\zeta$ satisfies $w_{n-2}(\zeta)<w_{n-1}(\zeta)=w_{n}(\zeta)$, the above bounds 
for $n\in\{20,50\}$
are valid for $\widehat{w}_{n}(\zeta)$ as well, and we believe the additional condition is 
in fact not necessary. The numeric data
suggests that $2n-\phi_{n}(\widetilde{w}_{n}(\zeta))$ converges to some constant not much larger than the value approximately $0.2004$ we compute with the given bound for $n=50$ above.
In view of this indication, Theorem~\ref{reggraph} is rather satisfactory.
Its proof essentially relies on the above idea, along with asymptotic
estimates for the values $\phi_{n}(\widetilde{w}_{n}(\zeta))$ for large $n$. 
For these estimates we will frequently use the well-known fact that
\begin{equation} \label{eq:euler}
\lim_{n\to\infty} (1+x/n)^{n}=e^{x}
\end{equation}
for real $x$, where the left hand side sequence is monotonic increasing.
We shall also use the variation of \eqref{eq:euler} that for $n\geq 1,\theta>1$ we have 
\begin{equation} \label{eq:insert}
\theta^{-1/(n+1)}<\frac{1}{1+\frac{\log(\theta)}{n+1}}=1-\frac{\log(\theta)}{\log(\theta)+n+1}.
\end{equation}

\begin{proof}[Proof of Theorem~\ref{reggraph}]
First we show \eqref{eq:ohne}. From the assumption of
Conjecture~\ref{schmsum} together with Proposition~\ref{psiprop} and 
\eqref{eq:fischers}, we obtain 
\begin{equation} \label{eq:fritze}
\widehat{w}_{n}^{\ast}(\zeta)\leq \widehat{w}_{n}(\zeta)\leq \phi_{n}(w_{n}(\zeta)). 
\end{equation}
Together with \eqref{eq:rumnudel} we derive
\begin{equation} \label{eq:dieerste}
\widehat{w}_{n}^{\ast}(\zeta)\leq \min\left\{ \frac{nw_{n}(\zeta)}{w_{n}(\zeta)-n+1},\phi_{n}(w_{n}(\zeta))\right\}.
\end{equation}
Let $D\in(1,\Delta)$ be fixed and consider large $n$, in particular $n>3D$. Let
\[
\kappa_{n}:=\frac{(2n-D)(n-1)}{n-D}.
\]
First assume $w_{n}(\zeta)\geq \kappa_{n}$.
Then $nw_{n}(\zeta)/(w_{n}(\zeta)-n+1)\leq 2n-D$ such 
that \eqref{eq:ohne} follows from \eqref{eq:dieerste}. 
Since all $\phi_{n}$ are increasing, it only remains to be shown 
that $\phi_{n}(\kappa_{n})\leq 2n-D$ for large $n$, 
to derive \eqref{eq:dieerste} in case of $n\leq w_{n}(\zeta)<\kappa_{n}$
as well. Hence we may assume $w_{n}(\zeta)=\kappa_{n}$.
It is easy to check
\begin{equation} \label{eq:schaetz}
\kappa_{n}=2n-2+(2-2/n)D+O(1/n)=2n+2D-2+O(1/n).
\end{equation}
In particular $\kappa_{n}=2n+o(n)$. Let
\[
\varphi_{n}(x)=\frac{(x+1)^{n+1}}{x^{n}}=(1+1/x)^{n}(1+x).
\]
With \eqref{eq:euler} we infer
\[
\varphi_{n}(\kappa_{n})= \left(1+\frac{1}{\kappa_{n}}\right)^{n}(\kappa_{n}+1)=
\left(1+\frac{1}{2n+o(n)}\right)^{n}(2n+o(n))
=(2\sqrt{e}+o(1))n.
\]
From \eqref{eq:j1gl1} we further deduce
\[
\varphi_{n}(\widehat{w}_{n,n+1}(\zeta))=\varphi_{n}(w_{n}(\zeta))=\varphi_{n}(\kappa_{n})=(2\sqrt{e}+o(1))n.
\]
We noticed preceding the theorem 
that $\widehat{w}_{n,n+1}(\zeta)\leq n$. Thus if we write $\widehat{w}_{n,n+1}(\zeta)=bn$ then
$b=b(n)\in{[0,1]}$, and again \eqref{eq:euler} yields that $b$ satisfies 
$be^{1/b}=2\sqrt{e}+o(1)$
as $n\to\infty$. This yields $b(n)=\tau+o(1)$ as $n\to\infty$ 
where $\tau\approx 0.5693$ is the solution $y\in{(0,1)}$ to $ye^{1/y}=2\sqrt{e}$. 
Together with \eqref{eq:schaetz} we infer
\begin{equation} \label{eq:ingwer}
\phi(\kappa_{n})=w_{n}(\zeta)\left(\frac{\widehat{w}_{n,n+1}(\zeta)}{w_{n}(\zeta)}\right)^{1/(n+1)}
=(2n+2D-2+o(1))\left(\frac{\tau}{2}+o(1)\right)^{1/(n+1)}.
\end{equation}
Inserting \eqref{eq:insert} with $\theta:=2/\tau\approx 3.5128$ in \eqref{eq:ingwer} yields
\begin{equation} \label{eq:ruppert}
\phi(\kappa_{n})\leq 
(2n+2D-2+o(1))\left(1-\frac{\log(2/\tau+o(1))}{\log(2/\tau+o(1))+n+1}\right).
\end{equation}
One checks that if $D<\Delta=\log(2/\tau)+1$ and $n$ is large then the right hand side of \eqref{eq:ruppert} is smaller 
than $2n-D$. To finish the proof of \eqref{eq:ohne} let $D$ tend to $\Delta$.

Now we show the estimates for $\widehat{w}_{n}(\zeta)$. In case of $w_{n-2}(\zeta)=w_{n}(\zeta)$, from \eqref{eq:mundln}
with $m=n-2$ we derive $\widehat{w}_{n}(\zeta)\leq 2n-3<2n-\Delta$, 
which proves the claim. In case
of $w_{n-1}(\zeta)<w_{n}(\zeta)$, we may apply
\eqref{eq:nudelrum} and obtain the same bounds for 
$\widehat{w}_{n}$ as in \eqref{eq:dieerste}, and can proceed as in the proof of \eqref{eq:ohne}. Hence only possibly in case 
of $w_{n-2}(\zeta)<w_{n-1}(\zeta)=w_{n}(\zeta)$ the bounds may fail, as asserted.
Finally for \eqref{eq:mit} we need precised error terms in dependence of $n$. 
First observe that \eqref{eq:fritze} and Theorem~\ref{zamgfasst} imply
\begin{equation}  \label{eq:japp}
\widehat{w}_{n}(\zeta)\leq \min\left\{ \max\left\{2n-2,\frac{nw_{n}(\zeta)}{w_{n}(\zeta)-n+1}\right\}, 
\phi_{n}(w_{n}(\zeta))\right\}.
\end{equation}
To derive \eqref{eq:mit} we use \eqref{eq:pasta} directly. 
With above argument applied to $D=2$, we see that $w_{n}(\zeta)\geq 2(n-1)^2/(n-2)$ implies $nw_{n}(\zeta)/(w_{n}(\zeta)-n+1)\leq 2n-2$.
Thus \eqref{eq:japp} implies \eqref{eq:mit}. Hence
again since $\phi_{n}$ are monotonic increasing,
it remains to be checked that $\phi_{n}(w)\leq 2n-2$ for $n\geq 10$, 
where $w:=2(n-1)^2/(n-2)$. Let
\[
H(x,y)=x-y+1-\left(\frac{x}{y}\right)^{n}.
\]
Recall $(w_{n}(\zeta),\phi_{n}(w_{n}(\zeta))=(w_{n}(\zeta),\widehat{w}_{n}(\zeta))$ 
satisfy \eqref{eq:pasta}. 
In particular $H(w,\phi(w))=0$ or $\phi_{n}(w)$ is the solution $y_{0}<w$ of
\[
H(w,y_{0})=\frac{2(n-1)^2}{n-2}-y_{0}+1+\left(\frac{2(n-1)^2}{(n-2)y_{0}}\right)^{n}=0.
\]
Some elementary calculation shows
\[
H(w,2n-2)=\frac{2}{n-2}+3-\left(\frac{n-1}{n-2}\right)^{n}=\frac{2}{n-2}+3-
\left(1+\frac{1}{n-2}\right)^{n-2}\left(1+\frac{1}{n-2}\right)^{2}.
\]
Together with \eqref{eq:euler} and some computation for small $n$, the right hand side 
can be easily checked to be positive for $n\geq 10$.
On the other hand we have
\[
\frac{dH}{dy}(w,y)=-1+nw^{n}y^{-n-1},
\]
which is positive for any $y<\phi_{n}(w)$ by \eqref{eq:glueck}. Thus indeed the root 
$y_{0}=\phi_{n}(w)$ of $H(w,y_{0})=0$ must be smaller than $2n-2$. 
This finishes the proof.
\end{proof}

\subsection{Proofs of Section~\ref{andere}}

In the proof of Theorem~\ref{zwischen} we will apply the transference inequality
\begin{equation} \label{eq:ogerman}
\frac{\widehat{w}_{n}(\underline{\zeta})}{\widehat{w}_{n}(\underline{\zeta})-n+1} 
\geq \widehat{\lambda}_{n}(\underline{\zeta})\geq 
\frac{\widehat{w}_{n}(\underline{\zeta})-1}{(n-1)\widehat{w}_{n}(\underline{\zeta})}
\end{equation}
due to German~\cite{german}, valid for all $n\geq 1$ and $\underline{\zeta}\in{\mathbb{R}^{n}}$ that are
$\mathbb{Q}$-linearly independent together with $\{1\}$.

\begin{proof}[Proof of Theorem~\ref{zwischen}]
Too keep the notation simple we restrict to vectors $(\zeta,\zeta^{2},\ldots,\zeta^{n})$, 
the proof
can be readily generalized to linear forms in arbitrary $\underline{\zeta}$.
Let $\epsilon>0$. By definition of $\widehat{w}_{n}(\zeta)$,
for any sufficiently large $k$ we have
\begin{equation} \label{eq:zweit}
\vert P_{k+1}(\zeta)\vert<\vert P_{k}(\zeta)\vert<H(P_{k+1})^{-\widehat{w}_{n}(\zeta)+\epsilon}.
\end{equation}

On the other hand, it follows from the definitions of $w_{n}(\zeta)$ and $\widehat{w}_{n}(\zeta)$
that for large $l$ two successive best approximations $P_{l},P_{l+1}$ 
satisfy
$\log H(P_{l+1})/\log H(P_{l})\leq w_{n}(\zeta)/\widehat{w}_{n}(\zeta)+\epsilon$, 
or equivalently
$\log H(P_{l})/\log H(P_{l+1})\geq \widehat{w}_{n}(\zeta)/w_{n}(\zeta)-\tilde{\epsilon}$, where
$\tilde{\epsilon}$ tends to $0$ as $\epsilon$ does.
This same argument applied repeatedly for $l$ from $k+1$ to $k+i-2$ shows that 
\begin{equation} \label{eq:bester}
\frac{\log H(P_{k+1})}{\log H(P_{k+i-1})}\geq \left(\frac{\widehat{w}_{n}(\zeta)}{w_{n}(\zeta)}\right)^{i-2}-
\tilde{\epsilon}_{1},
\end{equation}
for some $\tilde{\epsilon}_{1}$ which depends on $\epsilon$ and tends to $0$ as $\epsilon$ tends to $0$.
Combination of \eqref{eq:zweit} and \eqref{eq:bester} yields
\[
-\frac{\log \vert P_{k}(\zeta)\vert}{\log H(P_{k+i-1})}=-\frac{\log \vert P_{k}(\zeta)\vert}{\log H(P_{k+1})}\cdot
\frac{\log H(P_{k+1})}{\log H(P_{k+i-1})}
\geq (\widehat{w}_{n}(\zeta)-\epsilon)
\left(\left(\frac{\widehat{w}_{n}(\zeta)}{w_{n}(\zeta)}\right)^{i-2}-
\tilde{\epsilon}_{1}\right).
\]
Since $\vert P_{k}(\zeta)\vert > \vert P_{k+1}(\zeta)\vert > \cdots > \vert P_{k+n}(\zeta)\vert$ we infer 
that 
\[
-\frac{\log \vert P_{k+j}(\zeta)\vert}{\log H(P_{k+i-1})}\geq 
\widehat{w}_{n}(\zeta)\left(\frac{\widehat{w}_{n}(\zeta)}{w_{n}(\zeta)}\right)^{n-1}+\tilde{\epsilon}_{2},
\qquad 0\leq j\leq i-1,
\]
for some $\tilde{\epsilon}_{2}$ which again depends on $\epsilon$ and tends to $0$ as $\epsilon$ does.
Moreover by our assumption we can find arbitrarily large $k$ such that the polynomials $P_{k},P_{k+1},\ldots,P_{k+n}$
are linearly independent. Hence and since $H(P_{k+n})\geq H(P_{k+j})$ for $0\leq j\leq n$, 
we obtain \eqref{eq:heute} as we may take $\epsilon$ arbitrarily small. The estimate \eqref{eq:fastfertig}
is unconditioned since for $i=3$ Conjecture~\ref{bestapp} is unconditioned, see Remark~\ref{markenrehe}.

Finally \eqref{eq:99lb} follows from \eqref{eq:heute} with $i=n+1$ combined with 
\[
w_{n,n+1}(\zeta)=\frac{1}{\widehat{\lambda}_{n}(\zeta)}\leq \frac{(n-1)\widehat{w}_{n}(\zeta)}{\widehat{w}_{n}(\zeta)-1},
\]
by elementary rearrangements.
The right above inequality is obtained from \eqref{eq:ogerman} by taking reciprocals.
The dual estimates for the constants $\lambda_{n,j},\widehat{\lambda}_{n,j}$ are obtained very similarly,
where for \eqref{eq:feierabend} we applied
\[
\lambda_{n,n+1}(\zeta)=\frac{1}{\widehat{w}_{n}(\zeta)}\leq \frac{1-\widehat{\lambda}_{n}(\zeta)}{n-1}, 
\]
where again we used \eqref{eq:ogerman}.
\end{proof}

\begin{proof}[Proof of Theorem~\ref{anderer}]
	By assumption and Theorem~\ref{zwischen} inequality
	\eqref{eq:99lb} holds, which is stronger than \eqref{eq:glueck}. 
	As mentioned at the end of Section~\ref{implic} this 
	estimation in turn implies the claim \eqref{eq:wegweg}.
\end{proof}

\end{document}